\theoremstyle{definition}
\newtheorem{thm}{Theorem}[section]
\newtheorem{thm*}{Theorem}
\newtheorem{defi*}{Definition}
\newtheorem{lem}[thm]{Lemma}
\newtheorem{lem*}{Lemma}
\newtheorem{pro}[thm]{Proposition}
\newtheorem{pro*}{Proposition}
\newtheorem{cor}[thm]{Corollary}
\newcommand{\MC}[1]{\mathcal{#1}}
\newcommand{\MB}[1]{\mathbb{#1}}
\newcommand{\C}[2]{\begin{pmatrix} #1 \\ #2 \end{pmatrix}}
\newcommand{\marukakko}[1]{\left( #1 \right)}
\newcommand{\tyuukakko}[1]{\left \{ #1 \right \}}
\newcommand{\G}{\Gamma}
\newcommand{\NH}{\tilde{H}}
\DeclareMathOperator{\Spec}{Spec}
\DeclareMathOperator{\Tr}{Tr}
\DeclareMathOperator{\rank}{rank}
\title[
Combinatorial conditions for regular graphs to be periodic
]{
Combinatorial necessary conditions for regular graphs to induce periodic quantum walks
}
\author[S. Kubota]{Sho Kubota$^1$}
\thanks{$^1$S.K. is supported by JSPS KAKENHI (Grant No. 20J01175).}
\address{Department of Information Systems,
Faculty of Information Science and Technology,
Osaka Institute of Technology,
Osaka 573-0196, Japan.}
\email{sho.kubota@oit.ac.jp}
\date{}
\keywords{
Quantum walk, periodicity, mixed graph,
Hermitian adjacency matrix}
\subjclass[2010]{05C50; 81Q99; 05C20}
\begin{document}
\maketitle
\begin{abstract}
We derive combinatorial necessary conditions
for discrete-time quantum walks defined by regular mixed graphs to be periodic.
One useful necessary condition is that
if a $k$-regular mixed graph with $n$ vertices is periodic, then $2n/k$ must be an integer.
As an application of this work,
we determine periodicity of mixed complete graphs
and mixed graphs with a prime number of vertices.
Furthermore,
we study periodicity of mixed strongly regular graphs and several classes of mixed distance-regular graphs,
and extend existing results to mixed graphs.
\end{abstract}

\section{Introduction} \label{0505-1}

Let $\G =(V, E)$ be an undirected graph with the vertex set $V$ and the edge set $E$.
Define $\MC{A} = \MC{A}(\G)=\{ (x, y), (y, x) \mid \{x, y\} \in E \}$,
which is the set of the symmetric arcs of $\G$.
The origin and terminus of $a=(x, y) \in \MC{A}$ are denoted by $o(a), t(a)$, respectively.
We use $a^{-1}$ to denote the reverse arc of $a$.
Define the matrix $U = U(\G) \in \MB{C}^{\MC{A} \times \MC{A}}$ by
\[ U_{a,b} = \frac{2}{\deg_{\G} t(b)} \delta_{o(a), t(b)} - \delta_{a, b^{-1}}, \]
where $\delta_{x,y}$ is the Kronecker delta symbol.
Consider the 3-cycle $C_3$ for example.
Then we have
\[
U=U(C_3) = \begin{bmatrix}
0&0&1&0&0&0 \\
1&0&0&0&0&0 \\
0&1&0&0&0&0 \\
0&0&0&0&1&0 \\
0&0&0&0&0&1 \\
0&0&0&1&0&0 
\end{bmatrix}.
\]
In this case,
the matrix $U$ is a block diagonal matrix whose diagonal blocks are lined with permutation matrices.
Thus we see that $U^3 = I$,
where $I$ is the identity matrix.
We consider the question of whether there exists $\tau \in \MB{N}$
such that $U(\G)^{\tau} = I$ for various graphs.
This is nothing but the question of periodicity of discrete-time quantum walks.
An application of the periodicity can be seen in quantum cryptography \cite{PB}.

\subsection{Previous works on the periodicity of quantum walks}

The topic discussed in this paper is periodicity of discrete-time quantum walks.
See Table~\ref{80}.
Many studies in the Grover walk model are carried out.
Studies of the periodicity in other models are summarized in Table~\ref{0430-0}.
Very recently, the periodicity of mixed graphs has been studied,
and the periodicity of the mixed paths and the mixed cycles was completely determined \cite{KSeYa}.
State transfer, which is a more generalized problem of the periodicity, is discussed in \cite{CZ, Z}.

\begin{table}
  \centering
  \begin{tabular}{|c|c|}
\hline
Graphs & Ref. \\
\hline
\hline
Complete graphs, complete bipartite graphs, SRGs & \cite{HKSS2017} \\ \hline
Generalized Bethe trees & \cite{KSTY2018} \\ \hline
Hamming graphs, Johnson graphs & \cite{Y2019} \\ \hline
Cycle (3-state) & \cite{KKKS} \\ \hline
Complete graphs with self loops & \cite{IMT} \\ \hline
\end{tabular}
 \caption{Previous works on the periodicity of Grover walks on undirected graphs} \label{80}
\end{table}

\begin{table}
  \centering
  \begin{tabular}{|c|c|}
\hline
Models & Ref. \\
\hline
\hline
Quantum walks on $C_n$ with the most general $2 \times 2$ unitary coins & \cite{D} \\ \hline
Hadamard walks & \cite{KoST} \\ \hline
Szegedy walks induced by lazy random walks &\cite{HKSS2017} \\ \hline
Szegedy walks induced by non-reversible random walks on $C_n$ &\cite{HKSS2017} \\ \hline 
Fourier walks & \cite{S, KKKS} \\ \hline
Staggered walks & \cite{KSTY2019} \\ \hline
Lively quantum walks with generalized Grover coins & \cite{SMA} \\ \hline
\end{tabular}
 \caption{Previous works on the periodicity for other models} \label{0430-0}
\end{table}

Studies of periodicity in continuous-time quantum walks are in \cite{G2011, SSS} for example.
For continuous-time quantum walk, there is a lot of research into state transfer.
We refer to Godsil's survey \cite{G2012}.
Recent studies on state transfer are in \cite{BMW, LW, LLZZ, MDDT, TYC, WL}.

\subsection{Main Results}
The main goal of this study is to determine the periodicity of the mixed complete graphs.
In the case of undirected graphs,
there is only one complete graph for $n$,
whereas,
in the case of mixed complete graphs,
there are $3^{\frac{n(n-1)}{2}}$ distinct mixed complete graphs on any vertex set $V$ of size $n$.
In this paper,
we show that mixed complete graphs are not periodic for $n \geq 4$,
as is the case for undirected graphs.
See later sections for more detailed terms and definitions.
The following is one of the main results.

\begin{thm}
{\it
Let $G$ be a mixed complete graph equipped with an $\eta$-function $\theta$.
If $n \geq 4$, then $G$ is not periodic.
}
\end{thm}

If mixed graphs are regular,
we can find explicit relations between the characteristic polynomials of $\eta$-Hermitian adjacency matrices
and that of the time evolution matrices.
On the other hand,
coefficients of the characteristic polynomials of $\eta$-Hermitian adjacency matrices have combinatorial implications.
In addition to this,
if a mixed graph is periodic,
then all the eigenvalues of the time evolution matrix must be algebraic integers.
From the above considerations,
we can obtain the following combinatorial conditions,
which is the second main result of this study.

\begin{thm} \label{0902-1}
{\it
Let $G$ be a mixed graph on $n$ vertices equipped with an $\eta$-function $\theta$.
Assume that $G$ is $k$-regular and periodic.
Then,
\begin{enumerate}[(i)]
\item We have $\frac{2n}{k} \in \MB{Z}$; and
\item If $G$ is an undirected graph with $t$ triangles,
then we have $\frac{16t}{k^3} \in \MB{Z}$.
\end{enumerate}
}
\end{thm}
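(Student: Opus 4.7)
The plan is to combine three ingredients: (a) a periodic time evolution $U$ has every eigenvalue a root of unity, hence an algebraic integer; (b) the explicit relation between the characteristic polynomials of $U$ and of the $\eta$-Hermitian adjacency matrix $H=H_\theta(G)$ of a $k$-regular mixed graph, which is exactly the ``explicit relation'' announced in the introduction; and (c) the integer-valuedness of the coefficients of the characteristic polynomial of $H$ together with their Sachs-type combinatorial meaning.

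Writing $\Phi_H(y)=\sum_{j=0}^{n}c_j y^{n-j}$ with $c_0=1$ and each $c_j\in\MB{Z}$, I would form the rescaled polynomial whose $n$ roots are precisely $2\lambda_1/k,\ldots,2\lambda_n/k$:
\[
\Psi(y) \;=\; \left(\tfrac{2}{k}\right)^{n}\Phi_H\left(\tfrac{k}{2}y\right) \;=\; \sum_{j=0}^{n} c_j \left(\tfrac{2}{k}\right)^{j} y^{n-j}.
\]
By the spectral mapping worked out earlier in the paper, each root of $\Psi$ is of the form $\zeta+\zeta^{-1}$ for some eigenvalue $\zeta$ of $U$. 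Periodicity forces every such $\zeta$ to be a root of unity, so every root of $\Psi$ is an algebraic integer. Consequently every coefficient of $\Psi$ is an algebraic integer; but the $j$-th coefficient $c_j(2/k)^j$ is rational, and a rational algebraic integer is a rational integer, so $c_j(2/k)^j\in\MB{Z}$ for every $j$. This is the key ring-theoretic restriction that periodicity imposes on $\Phi_H$.

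Conditions (i) and (ii) then come from specializing to $j=2$ and $j=3$. Since $H$ has zero diagonal one has $c_1=0$, and the identity $-2c_2=\Tr(H^2)=\sum_{i,j}|H_{ij}|^2=2|E|=nk$ gives $c_2=-nk/2$ for any $k$-regular mixed graph; then $c_2(2/k)^2=-2n/k\in\MB{Z}$, which is (i). Under the undirected hypothesis $H$ is the ordinary adjacency matrix $A$, so $\Tr(A^3)=6t$, which combined with Newton's identity yields $c_3=-2t$; then $c_3(2/k)^3=-16t/k^3\in\MB{Z}$, which is (ii).

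The only delicate point is the multiplicity statement behind the spectral mapping: one needs to know that $\Psi$ really has each rescaled eigenvalue of $H$ as a root with the correct multiplicity and that no spurious contribution comes from the $\pm 1$ eigenvalues of $U$ occupying the remaining $n(k-2)$ coordinates of the arc space. This is handled by the explicit factorization of $\det(xI-U)$ in terms of $\Phi_H$ that the paper develops earlier; once that is in hand, the derivation of (i) and (ii) reduces to reading off the two coefficients $c_2(2/k)^2$ and $c_3(2/k)^3$ of $\Psi$.
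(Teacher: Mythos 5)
Your argument is correct in substance and rests on the same three pillars as the paper's: the spectral mapping $\det(\lambda I - U_\theta) = (\lambda^2-1)^{m-n}\det\bigl((\lambda^2+1)I - 2\lambda \NH_\eta\bigr)$, the fact that periodicity forces every eigenvalue of $U_\theta$ to be a root of unity and hence an algebraic integer (with $\Omega\cap\MB{Q}=\MB{Z}$), and the trace identities $\Tr(H_\eta^2)=2|E(G^{\pm})|$ and $\Tr(A^3)=6t$. The route differs in a useful way. The paper carries the algebraic integrality on the degree-$2n$ inherited factor $\Psi(x)=(2x)^n g\bigl(\tfrac{x+x^{-1}}{2}\bigr)$, whose roots are the inherited eigenvalues of $U_\theta$ themselves, and must then expand this binomially (its lemma on the index sets $\MC{I}_j$ and the ensuing corollary) to identify $\alpha_{2n-2}=n+4d_{n-2}$ and $\alpha_{2n-3}=8d_{n-3}$ before substituting the trace identities. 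You instead put the integrality on the degree-$n$ monic polynomial $\prod_i\bigl(y-\tfrac{2\lambda_i}{k}\bigr)$, whose roots $\zeta+\zeta^{-1}$ are algebraic integers for the same reason, and whose $j$-th coefficient is exactly $c_j(2/k)^j$ with no mixing between different $c_j$'s; this eliminates the combinatorial bookkeeping entirely and is arguably cleaner. (Your worry about spurious contributions from the birth eigenvalues $\pm1$ is moot in your formulation: you only need that each $2\lambda_i/k$ \emph{is} of the form $\zeta+\zeta^{-1}$ for some eigenvalue $\zeta$ of $U_\theta$, which the factorization gives directly.)

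One inaccuracy to fix: the blanket assertion that every coefficient $c_j$ of $\det(yI-H_\eta)$ lies in $\MB{Z}$, and the ensuing ``$c_j(2/k)^j\in\MB{Z}$ for every $j$,'' are false for general mixed graphs --- for instance a directed triangle contributes $-2\cos(3\eta)$ to $c_3$, which is irrational for generic $\eta$. This does not damage the theorem, because the two coefficients you actually use are rational for exactly the reasons you give: $c_2=-|E(G^{\pm})|=-nk/2$ holds for every mixed graph since $\Tr(H_\eta^2)=\sum_{x,y}|(H_\eta)_{x,y}|^2=2|E(G^{\pm})|$, and $c_3=-2t$ is invoked only under the undirected hypothesis, where $H_\eta$ is the ordinary adjacency matrix. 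Restrict the rationality claim to those two coefficients and the proof is complete.
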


The condition in the above theorem leads to an interesting fact
when the number of vertices in a regular mixed graph is prime.
The following is the third main result.

\begin{thm}
{\it
Let $\eta \in \MB{Q}\pi$,
and let $G$ be a $k$-regular weakly connected mixed graph with $p$ vertices,
where $p$ is a prime number.
Then the mixed graph $G$ is periodic if and only if $G^{\pm}$ is isomorphic to either
the cycle graph $C_p$ or the path graph $P_2$.
}
\end{thm}

In addition,
we study the periodicity of mixed strongly regular graphs and several classes of mixed distance-regular graphs,
and extend the results in \cite{HKSS2017} and \cite{Y2019} to mixed graphs.
The results indicate that there could potentially be $\overline{L(K_6)}$, Hamming graphs,
or Johnson graphs whose mixed version is periodic and the undirected version is not.

This paper is organized as follows.
Section~\ref{0504-2} is mainly preparation.
We define several matrices that appear in this paper.
In Section~\ref{0424-01},
we discuss relationships between the coefficients of the characteristic polynomials of the matrices.
In Section~\ref{0504-4}, we discuss conditions on the coefficients of the characteristic polynomials of the time evolution matrices for mixed graphs to be periodic.
If mixed graphs are periodic, the eigenvalues of the time evolution matrices must be algebraic integers.
We focus on this and study which ring the coefficients belong to.
In Section~\ref{0504-5},
we classify the periodic mixed complete graphs as an application of the previous sections.
The proof itself is straightforward.
In Section~\ref{0902-6},
we study the periodicity of mixed strongly regular graphs.
In particular, we can reproduce the results of Higuchi et al \cite{HKSS2017}
by applying Theorem~\ref{0902-1} to undirected strongly regular graphs.
In Section~\ref{0902-7}, we consider the periodicity of mixed Hamming graphs.
In Section~\ref{0902-8}, we investigate the periodicity of mixed sesquilinear forms graphs.
Using only Theorem~\ref{0902-1}~(i), we derive that these graphs are not periodic.
In Section~\ref{0902-9}, we consider the periodicity of mixed Johnson graphs.

\section{Preliminaries} \label{0504-2}


A {\it mixed graph} $G$ consists of a finite set $V(G)$ of vertices
together with a subset $\MC{A}(G) \subset V(G) \times V(G) \setminus \{ (x,x) \mid x \in V \}$
of ordered pairs called {\it arcs}.
Let $G$ be a mixed graph.
Define $\MC{A}^{-1}(G) = \{ a^{-1} \mid a \in \MC{A}(G) \}$
and $\MC{A}^{\pm}(G) = \MC{A}(G) \cup \MC{A}^{-1}(G)$.
If there is no danger of confusion, we write $\MC{A}(G)$ as $\MC{A}$ simply.
The graph $G^{\pm} = (V(G), \MC{A}^{\pm})$ is
so-called the {\it underlying graph} of a mixed graph $G$,
and this is regarded as an undirected graph depending on context.
On the other hand,
we equate an undirected graph with a mixed graph
by considering undirected edges $\{x, y\}$ as bidirectional arcs $(x,y), (y,x)$.
For a vertex $x \in V(G)$,
define $\deg_{G} x = \deg_{G^{\pm}} x$.
A mixed graph $G$ is $k$-regular if $\deg_{G}x = k$ for any vertex $x \in V(G)$.
Throughout this paper, we assume that mixed graphs are weakly connected,
i.e.,
we assume that $G^{\pm}$ is connected.

\subsection{$\eta$-Hermitian adjacency matrices}

Let $G = (V, \MC{A})$ be a mixed graph.
For $\eta \in [0, 2\pi)$, 
the {\it $\eta$-Hermitian adjacency matrix} $H_{\eta} = H_{\eta}(G) \in \MB{C}^{V \times V}$ is defined by
\[
(H_{\eta})_{x,y} = \begin{cases}
1 \qquad &\text{if $(x,y) \in \MC{A} \cap \MC{A}^{-1}$,} \\
e^{\eta i} \qquad &\text{if $(x,y) \in \MC{A} \setminus \MC{A}^{-1}$,} \\
e^{-\eta i} \qquad &\text{if $(x,y) \in \MC{A}^{-1} \setminus \MC{A}$,} \\
0 \qquad &\text{otherwise.}
\end{cases}
\]
When $\eta = \frac{\pi}{2}$,
the matrix $H_{\frac{\pi}{2}}$ is nothing but the Hermitian adjacency matrix.
This is introduced by Guo--Mohar \cite{GM} and Li--Liu \cite{LL}, independently.
When $\eta = \frac{\pi}{3}$,
the matrix $H_{\frac{\pi}{3}}$ is called the Hermitian adjacency matrix of the second kind.
This is introduced by Mohar \cite{M}.
We refer to \cite{AAS, GS, HBHZ, HF, LY, YWY} as recent studies on Hermitian adjacency matrices.
Note that $H_{\eta}(G^{\pm})$ coincides with the adjacency matrix of $G^{\pm}$ as an undirected graph.
We also refer to gain graphs \cite{GH, LWH, MKS, R, SK, WvD, XZWT},
which are a more generalized version of mixed graphs.

Define the {\it degree matrix} $D = D(G) \in \MB{C}^{V \times V}$ by $D_{x,y} = (\deg_{G}x)\delta_{x,y}$
for vertices $x,y \in V(G)$.
For $\eta \in [0, 2\pi)$,
the {\it normalized $\eta$-Hermitian adjacency matrix} $\NH_{\eta}$ is defined by
\[ \NH_{\eta} = D^{-\frac12} H_{\eta} D^{-\frac12}. \]
Note that if a mixed graph $G$ is $k$-regular, we have $\NH_{\eta} = \frac{1}{k} H_{\eta}$.

Let $G$ be a mixed graph.
The list of the eigenvalues of $H_{\eta}(G)$ together with their multiplicities,
denoted by $\Spec(H_{\eta}(G))$, is called {\it $H_{\eta}$-spectrum} of $G$.

\subsection{Quantum walks defined by mixed graphs}
Let $\eta \in [0, 2\pi)$,
and let $G = (V, \MC{A})$ be a mixed graph.
The {\it $\eta$-function} $\theta : \MC{A}^{\pm} \to \MB{R}$ of a mixed graph $G$ is defined by
\[
\theta(a) = \begin{cases}
\eta \qquad &\text{if $a \in \MC{A} \setminus \MC{A}^{-1}$,} \\
-\eta \qquad &\text{if $a \in \MC{A}^{-1} \setminus \MC{A}$,} \\
0 \qquad &\text{if $a \in \MC{A} \cap \MC{A}^{-1}$.}
\end{cases}
\]
Note that $\theta(a^{-1}) = -\theta(a)$ for any $a \in \MC{A}^{\pm}$.

In \cite{KST},
the authors provided a quantum walk defined by a mixed graph.
Let $G = (V, \MC{A})$ be a mixed graph equipped with an $\eta$-function $\theta$.
We define several matrices related to quantum walks.
The {\it boundary matrix} $K = K(G) \in \MB{C}^{V \times \MC{A}^{\pm}}$ is defined by
\[ K_{x,a} = \frac{1}{\sqrt{\deg_G x}} \delta_{x,t(a)}. \]
The {\it coin matrix} $C = C(G) \in \MB{C}^{\MC{A}^{\pm} \times \MC{A}^{\pm}}$ is defined by
$C = 2K^*K-I$.
The {\it shift matrix} $S_{\theta} = S_{\theta}(G) \in \MB{C}^{\MC{A}^{\pm} \times \MC{A}^{\pm}}$
is defined by $(S_{\theta})_{ab} = e^{\theta(b)i}\delta_{a,b^{-1}}$.
Define the {\it time evolution matrix} $U_{\theta} = U_{\theta}(G) \in \MB{C}^{\MC{A}^{\pm} \times \MC{A}^{\pm}}$ by $U_{\theta} = S_{\theta} C$.
The entries of $U_{\theta}$ is calculated in \cite{KSeYa}.

\begin{lem}[Lemma~5.1 in \cite{KSeYa}] \label{22}
{\it
Let $G = (V, \MC{A})$ be a mixed graph equipped with an $\eta$-function $\theta$.
We have
\[ (U_{\theta})_{a,b} = e^{-\theta(a) i} \marukakko{
  \frac{2}{\deg_{G} t(b)} \delta_{o(a), t(b)} - \delta_{a, b^{-1}}
  } \]
for any $a,b \in \MC{A}^{\pm}$.
}
\end{lem}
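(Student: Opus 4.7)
The plan is to prove the formula by direct computation from the definitions $U_{\theta}=S_{\theta}C$ and $C=2K^{*}K-I$, so that $U_{\theta}=2S_{\theta}K^{*}K-S_{\theta}$. The only subtlety, which is not an obstacle so much as a bookkeeping point, is matching the phase $e^{\theta(b)i}$ coming out of $S_{\theta}$ to the desired $e^{-\theta(a)i}$ using the identity $\theta(a^{-1})=-\theta(a)$ on the supports of the two Kronecker deltas that appear.

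First I would compute $K^{*}K$. Since $K_{x,a}=\frac{1}{\sqrt{\deg x}}\,\delta_{x,t(a)}$ is real,
\[
(K^{*}K)_{a,b}=\sum_{x\in V}\frac{\delta_{x,t(a)}}{\sqrt{\deg x}}\cdot\frac{\delta_{x,t(b)}}{\sqrt{\deg x}}=\frac{\delta_{t(a),t(b)}}{\deg_{G}t(a)}.
\]
Next I would compute $S_{\theta}K^{*}K$. Using $(S_{\theta})_{a,c}=e^{\theta(c)i}\delta_{a,c^{-1}}$ and the fact that the only nonzero term in the sum over $c$ is $c=a^{-1}$, so $\theta(c)=-\theta(a)$ and $t(c)=t(a^{-1})=o(a)$, one gets
\[
(S_{\theta}K^{*}K)_{a,b}=e^{-\theta(a)i}\,\frac{\delta_{o(a),t(b)}}{\deg_{G}o(a)}.
\]
Because $o(a)=t(b)$ on the support of $\delta_{o(a),t(b)}$, the factor $\deg_{G}o(a)$ can be rewritten as $\deg_{G}t(b)$.

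Finally I would subtract $S_{\theta}$. For the term $(S_{\theta})_{a,b}=e^{\theta(b)i}\delta_{a,b^{-1}}$, on its support we have $b=a^{-1}$, so $\theta(b)=\theta(a^{-1})=-\theta(a)$ and the factor becomes $e^{-\theta(a)i}\delta_{a,b^{-1}}$. Combining,
\[
(U_{\theta})_{a,b}=2(S_{\theta}K^{*}K)_{a,b}-(S_{\theta})_{a,b}=e^{-\theta(a)i}\left(\frac{2\,\delta_{o(a),t(b)}}{\deg_{G}t(b)}-\delta_{a,b^{-1}}\right),
\]
which is the claimed identity. As noted in the excerpt, the proof itself is straightforward; the only care required is to consistently exploit $\theta(a^{-1})=-\theta(a)$ on the relevant supports so that a single global phase $e^{-\theta(a)i}$ can be factored out.
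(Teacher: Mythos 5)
Your computation is correct: the entrywise evaluation of $K^{*}K$, the collapse of the sum over $c$ to the single term $c=a^{-1}$ via $\delta_{a,c^{-1}}$, and the use of $\theta(a^{-1})=-\theta(a)$ together with $t(a^{-1})=o(a)$ and $\delta_{a^{-1},b}=\delta_{a,b^{-1}}$ on the relevant supports all check out, and the replacement of $\deg_{G}o(a)$ by $\deg_{G}t(b)$ is legitimate since the delta forces $o(a)=t(b)$. The paper itself imports this lemma from \cite{KSeYa} without reproducing a proof, and your direct computation from $U_{\theta}=2S_{\theta}K^{*}K-S_{\theta}$ is exactly the straightforward verification that is intended there.
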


If a mixed graph $G$ is an undirected graph,
i.e., $G^{\pm} = G$,
then $\theta(a) = 0$ for any $a \in \MC{A}$.
Thus, in this case,
the matrix $U_{\theta}$ is the time evolution matrix of the conventional Grover walk on the graph,
given as the matrix $U$ in Section~\ref{0505-1}.

The matrices used in our quantum walks are summarized in Table~\ref{1000},
where $G = (V, \MC{A})$ is a mixed graph equipped with an $\eta$-function $\theta$.

\begin{table}[H]
  \centering
  \begin{tabular}{|c|c|c|c|}
\hline
Notation & Name & Indices of rows and columns & Definition \\
\hline
\hline
$K$ & Boundary & $V \times \MC{A}^{\pm}$ & $K_{x,a} =  \frac{1}{ \sqrt{\deg x} } \delta_{x, t(a)}$ \\
\hline
$C$ & Coin & $\MC{A}^{\pm} \times \MC{A}^{\pm}$ &$ C = 2K^*K - I$ \\
\hline
$S_{\theta}$ & Shift & $\MC{A}^{\pm} \times \MC{A}^{\pm}$ & $(S_{\theta})_{ab} = e^{\theta(b)i}\delta_{a,b^{-1}}$ \\
\hline
$U_{\theta}$ & Time evolution & $\MC{A}^{\pm} \times \MC{A}^{\pm}$  & $U_{\theta} = S_{\theta} C$ \\
\hline
\end{tabular}
 \caption{The matrices used in our quantum walk} \label{1000}
\end{table}

Let $U_{\theta}$ be a time evolution matrix of a mixed graph $G$ equipped with an $\eta$-function $\theta$.
We say that $G$ is {\it periodic} if there exists $\tau \in \MB{N}$ such that $U_{\theta}^{\tau} = I$.
In \cite{KSeYa},
the authors provide a necessary and sufficient condition for a mixed graph to be periodic.

\begin{lem}[Lemma~5.3 in \cite{KSeYa}] \label{13}
{\it
Let $U_{\theta}$ be a time evolution matrix of a mixed graph $G$ equipped with an $\eta$-function $\theta$.
Then, we have
\begin{equation} \label{70}
\{ \tau \in \MB{N} \mid U_{\theta}^{\tau} = I \} =
\{ \tau \in \MB{N} \mid \lambda^{\tau} = 1 \text{ \emph{for any} $\lambda \in \Spec(U_{\theta})$} \}.
\end{equation}
In particular, $G$ is periodic if and only if
there exists $\tau \in \MB{N}$ such that $\lambda^{\tau} = 1$
for any eigenvalue $\lambda$ of $U_{\theta}$.
}
\end{lem}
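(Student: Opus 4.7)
The plan is to reduce the equality to the standard fact that a diagonalizable matrix is the identity iff each diagonal entry of its eigenvalue-diagonal form is $1$. The key step, therefore, is to verify that the time evolution matrix $U_{\theta}$ is diagonalizable. In fact I will show the stronger statement that $U_{\theta}$ is unitary, which is the natural physical expectation for a quantum walk.

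To see that $U_{\theta}$ is unitary, I would factor it as $U_{\theta}=S_{\theta}C$ and check that each factor is unitary. For the shift matrix $S_{\theta}$, a direct computation from $(S_{\theta})_{ab}=e^{\theta(b)i}\delta_{a,b^{-1}}$ together with $\theta(a^{-1})=-\theta(a)$ gives $(S_{\theta}^*S_{\theta})_{ab}=e^{-\theta(a)i}e^{\theta(b)i}\delta_{a,b}=\delta_{a,b}$. For the coin matrix $C=2K^*K-I$, the essential point is that $KK^*=I_V$: indeed
\[
(KK^*)_{x,y}=\sum_{a\in\MC{A}^{\pm}}\frac{1}{\sqrt{\deg x}\sqrt{\deg y}}\delta_{x,t(a)}\delta_{y,t(a)}=\delta_{x,y},
\]
since there are exactly $\deg x$ arcs $a$ with $t(a)=x$. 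Thus $K^*K$ is an orthogonal projection and $C$ is a reflection (Hermitian and unitary). Consequently $U_{\theta}=S_{\theta}C$ is unitary.

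Now that $U_{\theta}$ is unitary, it is diagonalizable: choose an invertible $P$ with $P^{-1}U_{\theta}P=\diag(\lambda_1,\ldots,\lambda_N)$, where $\{\lambda_j\}$ is a list (with multiplicity) of the eigenvalues comprising $\Spec(U_{\theta})$ and $N=|\MC{A}^{\pm}|$. For any $\tau\in\MB{N}$,
\[
U_{\theta}^{\tau}=I\ \iff\ P^{-1}U_{\theta}^{\tau}P=I\ \iff\ \diag(\lambda_1^{\tau},\ldots,\lambda_N^{\tau})=I\ \iff\ \lambda_j^{\tau}=1\ \text{for all }j.
\]
This is exactly the set equality \eqref{70}, and the ``in particular'' clause is immediate by taking the union over $\tau\in\MB{N}$ to be nonempty on both sides.

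I do not anticipate any real obstacle: the whole content is the unitarity of $U_{\theta}$, which in turn reduces to the two short computations $S_{\theta}^*S_{\theta}=I$ and $KK^*=I_V$. If one wanted to avoid even invoking unitarity, it would suffice to know that $C$ and $S_{\theta}$ are each diagonalizable with unimodular spectrum; but the unitary route is cleanest and gives $|\lambda|=1$ for every $\lambda\in\Spec(U_{\theta})$ as a useful byproduct, which is what makes the subsequent algebraic-integer analysis in the paper meaningful.
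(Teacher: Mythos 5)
Your proposal is correct: the unitarity of $U_{\theta}$ (via $S_{\theta}^*S_{\theta}=I$ and $KK^*=I_V$, so that $C$ is a Hermitian involution) plus diagonalizability is exactly the standard argument, and both computations check out. The paper itself gives no proof here --- it simply cites Lemma~5.3 of \cite{KSeYa} --- so your write-up supplies the expected proof rather than an alternative route.
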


\section{Relationships between coefficients of characteristic polynomials} \label{0424-01}

It is well-studied relationships between the eigenvalues of time evolution matrices
of quantum and random walks \cite{HKSS2014, KoKKS, KSY, MOS, SS}.
In our setting,
$U_{\theta}$ is the time evolution matrix of the quantum walk and
$\NH_{\eta}$ is a matrix corresponding to a random walk.
We cite the result in \cite{KoKKS}.

\begin{thm}[Corollary~3 in \cite{KoKKS}]
{\it
Let $U_{\theta}$ be a time evolution matrix of a mixed graph $G$ equipped with an $\eta$-function $\theta$,
and let $\NH_{\eta} = \NH_{\eta}(G)$. 
Then, we have
\[ \det(\lambda I_{2m} - U_{\theta})
= (\lambda^2 - 1)^{m-n} \det( (\lambda^2 + 1)I_n - 2\lambda \NH_{\eta}), \]
where $n$ is the number of vertices of $G^{\pm}$,
and $m$ is the number of edges of $G^{\pm}$.
}
\end{thm}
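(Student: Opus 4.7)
The plan is to reduce the $2m$-dimensional characteristic polynomial of $U_\theta$ to an $n$-dimensional determinant involving $\NH_\eta$ by a Schur-complement style factorization, exploiting the rank-$n$ structure $C = 2K^*K - I$ of the coin.

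First, I would establish three basic identities. A direct index computation on the boundary matrix gives $KK^* = I_V$, since $(KK^*)_{x,x} = \sum_a \frac{1}{\deg x}\delta_{x, t(a)} = 1$ and off-diagonal entries vanish. A second, slightly longer, index computation yields
\[ K S_\theta K^* = \NH_\eta. \]
Indeed, $(KS_\theta K^*)_{x,y}$ collapses onto the unique pair $a = b^{-1}$ with $o(b) = x$ and $t(b) = y$, contributing $\frac{1}{\sqrt{\deg x \deg y}} e^{\theta(b)i}$; matching the three cases in the definition of $\theta$ against the three nonzero cases in the definition of $H_\eta$ shows this equals the $(x,y)$-entry of $D^{-1/2} H_\eta D^{-1/2}$. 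On the shift side, the decisive observation is the involution $S_\theta^2 = I_{2m}$: a direct calculation gives $(S_\theta^2)_{a,c} = e^{\theta(c^{-1})i} e^{\theta(c)i} \delta_{a,c} = \delta_{a,c}$, using $\theta(c^{-1}) = -\theta(c)$. As a consequence, $(\lambda I + S_\theta)(\lambda I - S_\theta) = (\lambda^2 - 1) I_{2m}$, and $\det(\lambda I + S_\theta) = (\lambda^2 - 1)^m$ (each $2 \times 2$ block indexed by an arc pair $\{a, a^{-1}\}$ has eigenvalues $\pm 1$).

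Next, I would use $U_\theta = S_\theta C = S_\theta(2K^*K - I)$ to write
\[ \lambda I_{2m} - U_\theta = (\lambda I_{2m} + S_\theta) - 2 S_\theta K^* K. \]
For $\lambda \neq \pm 1$, the matrix $\lambda I + S_\theta$ is invertible, and applying the Sylvester determinant identity gives
\[ \det(\lambda I - U_\theta) = \det(\lambda I + S_\theta) \cdot \det\bigl(I_n - 2 K (\lambda I + S_\theta)^{-1} S_\theta K^* \bigr). \]
Using $S_\theta^2 = I$ to write $(\lambda I + S_\theta)^{-1} = \frac{1}{\lambda^2 - 1}(\lambda I - S_\theta)$, we get $(\lambda I + S_\theta)^{-1} S_\theta = \frac{1}{\lambda^2 - 1}(\lambda S_\theta - I)$. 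Substituting $K S_\theta K^* = \NH_\eta$ and $KK^* = I_n$ then produces
\[ \det(\lambda I - U_\theta) = (\lambda^2 - 1)^m \cdot \frac{1}{(\lambda^2 - 1)^n} \det\bigl((\lambda^2 + 1) I_n - 2\lambda \NH_\eta\bigr), \]
which is the claimed formula. Both sides are polynomials in $\lambda$ of degree $2m$, so the identity extends from the cofinite set $\lambda \neq \pm 1$ to all $\lambda$.

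The main obstacle is not difficult but requires care: verifying $KS_\theta K^* = \NH_\eta$ involves matching the three cases of $\theta$ (on $\MC{A}\setminus \MC{A}^{-1}$, on $\MC{A}^{-1}\setminus \MC{A}$, and on $\MC{A}\cap \MC{A}^{-1}$) against the three nonzero cases of $H_\eta$, and tracking the factor $\frac{1}{\sqrt{\deg x\,\deg y}}$ to recognize the two-sided normalization $D^{-1/2} H_\eta D^{-1/2}$. Once these linear-algebraic ingredients are in place, the determinant identity is a routine Schur-complement calculation.
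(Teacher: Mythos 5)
Your proof is correct. Note, however, that the paper does not prove this theorem at all: it is quoted verbatim as Corollary~3 of the reference [KoKKS] (listed as ``in preparation''), so there is no in-paper argument to compare against. Your route is the standard one for spectral mapping theorems of Szegedy/Grover-type walks: the identities $KK^* = I_n$, $KS_{\theta}K^* = \NH_{\eta}$ and $S_{\theta}^2 = I_{2m}$ (the last using $\theta(a^{-1}) = -\theta(a)$) are all verified correctly, the block structure of $S_{\theta}$ over pairs $\{a, a^{-1}\}$ gives $\det(\lambda I + S_{\theta}) = (\lambda^2-1)^m$, and the Weinstein--Aronszajn/Sylvester identity collapses the $2m\times 2m$ determinant to an $n\times n$ one. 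The only cosmetic point is the final extension step: for trees one has $m - n = -1$, so the right-hand side is a priori a rational function rather than a polynomial of degree $2m$; the identity still follows since two rational functions agreeing off $\lambda = \pm 1$ coincide, but the sentence ``both sides are polynomials of degree $2m$'' should be phrased as an equality of rational functions (or restricted to $m \ge n$).
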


In the above equality, 
we call $\det( (\lambda^2 + 1)I_n - 2\lambda \NH_{\eta})$ the {\it inherited factor} of $U_{\theta}$.
Since the roots of $(\lambda^2 - 1)^{m-n}$ are $\pm 1$,
the inherited factor determines whether the graph is periodic or not.
Let $g(x) = \det (xI - \NH_{\eta})$.
Then the inherited factor can be written as
\[
\det( (\lambda^2 + 1)I_n - 2\lambda \NH_{\eta})
= (2\lambda)^n \det \marukakko{ \frac{\lambda + \lambda^{-1}}{2} I_n - \NH_{\eta} }
= (2\lambda)^n g \marukakko{ \frac{\lambda + \lambda^{-1}}{2} }.
\]

Fix an positive integer $n$.
Define
$\MC{I}_j = \{ (i,l) \in \MB{Z} \times \MB{Z} \mid 0 \leq l \leq i \leq n, n+2l - i = j \}$
for $j \in \{ 0, 1, \dots, 2n \}$.
We provide $\MC{I}_{2n-3}$ in Figure~\ref{0429-0} as an example.
The black dots in Figure~\ref{0429-0} represent
the set $\{ (i,l) \in \MB{Z} \times \MB{Z} \mid 0 \leq l \leq i \leq n \}$.
The set $\MC{I}_{2n-3}$ is the intersection of
these black dots and the blue line $l = \frac{i}{2} + \frac{n-3}{2}$.
The set $\MC{I}_j$ will be discussed later.

\begin{figure}[h]
\begin{center}
\begin{tikzpicture}
[scale = 0.7, line width = 0.7pt,
v/.style = {circle, fill = black, inner sep = 0.8mm}]
\draw[->] (-1, 0) -- (8,0);
\draw[->] (0, -1) -- (0,8);
\draw (8.5, 0) node {$i$};
\draw (0, 8.5) node {$l$};
\draw[line width = 1pt] (-0.5, -0.5) -- (7.5, 7.5);
\draw (7.5, 8.3) node {$l = i$};
\draw (-0.15, 1) -- (0.15, 1); \draw (-0.5, 1) node {$1$};
\draw (-0.15, 2) -- (0.15, 2); \draw (-0.5, 2) node {$2$};
\draw (-0.15, 5) -- (0.15, 5); \draw (-1, 5) node {$n-2$};
\draw (-0.15, 6) -- (0.15, 6); \draw (-1, 6) node {$n-1$};
\draw (-0.15, 7) -- (0.15, 7); \draw (-0.5, 7) node {$n$};
\draw (1, -0.15) -- (1,0.15); \draw (1, -0.5) node {$1$};
\draw (2, -0.15) -- (2,0.15); \draw (2, -0.5) node {$2$};
\draw (6, -0.15) -- (6,0.15); \draw (6, -0.5) node {$n-1$};
\draw (7, -0.15) -- (7,0.15); \draw (7, -0.5) node {$n$};
\node[v] at (0, 0) {};
\node[v] at (1, 0) {};
\node[v] at (1, 1) {};
\node[v] at (2, 0) {};
\node[v] at (2, 1) {};
\node[v] at (2, 2) {};
\node[v] at (6, 0) {};
\node[v] at (6, 1) {};
\node[v] at (6, 2) {};
\node[v] at (6, 5) {};
\node[v] at (6, 6) {};
\node[v] at (7, 0) {};
\node[v] at (7, 1) {};
\node[v] at (7, 2) {};
\node[v] at (7, 5) {};
\node[v] at (7, 6) {};
\node[v] at (7, 7) {};
\draw (4, 2) node {$\cdots$};
\draw (4, 1) node {$\cdots$};
\draw (4, 0.25) node {$\cdots$};
\draw (6, 3.5) node {$\vdots$};
\draw (7, 3.5) node {$\vdots$};
%
\draw[line width = 1pt, blue] (1.6, 2.8) -- (7.4, 5.7);
\draw[blue] (9, 6.2) node {$l = \frac{i}{2} + \frac{n-3}{2}$};
\end{tikzpicture}
\end{center}
\caption{The set $\MC{I}_{2n-3}$} \label{0429-0}
\end{figure}


Let $U_{\theta}$ be a time evolution matrix of a mixed graph $G = (V, \MC{A})$ equipped with an $\eta$-function $\theta$,
and let
\begin{align*}
f(x) &= \det (xI - H_{\eta}(G)) = \sum_{i=0}^{n} c_i x^i, \\
g(x) &= \det (xI - \NH_{\eta}(G)) = \sum_{i=0}^{n} d_i x^i, \\
\Psi(x) &= (2x)^{n} g \marukakko{\frac{x+x^{-1}}{2}} = \sum_{i=0}^{2n} \alpha_i x^i,
\end{align*}
where $n$ is the number of vertices of $G$.
From now on,
these notations will be used unless otherwise noted.
Note that $c_{n-1} = d_{n-1} = 0$ since $\Tr H_{\eta}(G) = \Tr \NH_{\eta}(G) = 0$.
First, we explore relationships between coefficients of $f, g,$ and $\Psi$.

\begin{lem} \label{0415-01}
{\it
With the above notation,
we have
\[ \alpha_j = \sum_{(i,l) \in \MC{I}_j} 2^{n-i} d_i \C{i}{l}. \]
} 
\end{lem}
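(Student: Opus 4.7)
The plan is to prove the identity by a direct expansion of the defining formula $\Psi(x) = (2x)^n g\bigl((x+x^{-1})/2\bigr)$ and then matching coefficients of $x^j$ on both sides. This is purely a symbolic manipulation; the index set $\MC{I}_j$ is tailored so that the exponents that arise from the expansion land exactly in the right place.

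First I would substitute $g(y) = \sum_{i=0}^n d_i y^i$ into the definition of $\Psi(x)$ to write
\[
\Psi(x) \;=\; (2x)^n \sum_{i=0}^n d_i \marukakko{\frac{x+x^{-1}}{2}}^i
\;=\; \sum_{i=0}^n 2^{n-i} d_i\, x^n (x+x^{-1})^i,
\]
where I pull out the $2^n$ from the front and absorb each factor of $2^{-i}$ into the exponent. The next step is to apply the binomial theorem to $(x+x^{-1})^i$, giving
\[
(x+x^{-1})^i \;=\; \sum_{l=0}^i \binom{i}{l} x^{2l-i}.
\]
Multiplying by $x^n$ turns each term into $x^{n+2l-i}$, so
\[
\Psi(x) \;=\; \sum_{i=0}^n \sum_{l=0}^i 2^{n-i} d_i \binom{i}{l}\, x^{n+2l-i}
\;=\; \sum_{(i,l)\in \MC{I}} 2^{n-i} d_i \binom{i}{l}\, x^{n+2l-i}.
\]

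Finally, I collect the coefficient of $x^j$. Since $(i,l) \in \MC{I}$ ranges over $0 \le l \le i \le n$, the exponent $n+2l-i$ ranges over $\{0, 1, \dots, 2n\}$, matching the index range of $\alpha_j$. Partitioning $\MC{I}$ according to the value of $n+2l-i$ precisely produces the subsets $\MC{I}_j$, so reading off the coefficient of $x^j$ yields
\[
\alpha_j \;=\; \sum_{(i,l)\in \MC{I}_j} 2^{n-i} d_i \binom{i}{l},
\]
which is the claimed formula.

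There is no real obstacle here; the only thing to watch is bookkeeping of the exponent $n+2l-i$, which must be checked to be nonnegative and at most $2n$ as $(i,l)$ ranges over $\MC{I}$, so that every power of $x$ appearing in $\Psi(x)$ really corresponds to a coefficient $\alpha_j$ with $0 \le j \le 2n$. This follows immediately from $0 \le l \le i \le n$. Note also that the coefficients $c_i$ of $f(x)$ do not enter the statement; they will be used in subsequent results relating $f$ and $g$ via the regularity assumption $\NH_\eta = k^{-1} H_\eta$.
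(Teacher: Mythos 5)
Your proposal is correct and follows essentially the same route as the paper: expand $g$ as a polynomial, apply the binomial theorem to $\bigl(\frac{x+x^{-1}}{2}\bigr)^i$, and collect the terms with exponent $n+2l-i=j$, which is exactly how the set $\MC{I}_j$ is designed. Nothing further is needed.
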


\begin{proof}
Indeed,
\begin{align*}
\sum_{j=0}^{2n} \alpha_j x^j
&= (2x)^{n} g \marukakko{\frac{x+x^{-1}}{2}} \\
&= (2x)^{n} \marukakko{ \sum_{i=0}^n d_i \marukakko{ \frac{x+x^{-1}}{2} }^i } \\
&= (2x)^{n} \marukakko{ \sum_{i=0}^n \frac{d_i}{2^i} \sum_{l=0}^i \C{i}{l} x^l (x^{-1})^{i-l} } \\
&= (2x)^{n} \sum_{i=0}^n \frac{d_i}{2^i} \sum_{l=0}^i \C{i}{l} x^{2l-i} \\
&= \sum_{i=0}^n \sum_{l=0}^i 2^{n-i} d_i \C{i}{l} x^{n+2l-i} \\
&= \sum_{j=0}^{2n} \sum_{(i,l) \in \MC{I}_j} 2^{n-i} d_i \C{i}{l} x^j.
\end{align*}
\end{proof}

\begin{cor} \label{0415-02}
{\it
We have
  \begin{enumerate}[(i)]
  \item $\alpha_{2n} = 1$;
  \item $\alpha_{2n-1} = 0$;
  \item $\alpha_{2n-2} = n + 4 d_{n-2}$; and
  \item $\alpha_{2n-3} = 8 d_{n-3}$.
  \end{enumerate}
In particular,
the inherited factor $\Psi(x)$ of $U_{\theta}$ is a monic polynomial.
}
\end{cor}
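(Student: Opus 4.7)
The plan is to apply Lemma~\ref{0415-01} directly, pushing $j$ through the values $2n,\,2n-1,\,2n-2,\,2n-3$ and enumerating the few index pairs that remain in $\MC{I}_j$ near the top end. Near $j = 2n$ the constraint $n + 2l - i = j$ together with $0 \leq l \leq i \leq n$ is very restrictive, so the sum defining $\alpha_j$ collapses to at most two terms.

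First I would record two structural facts about $g(x) = \det(xI - \NH_{\eta}(G))$ that drive the computation: $d_n = 1$ since $g$ is monic of degree $n$, and $d_{n-1} = 0$ since $\Tr \NH_{\eta}(G) = 0$. Solving $n + 2l - i = j$ under the constraints $0 \leq l \leq i \leq n$ then yields the short enumerations
\[
\MC{I}_{2n} = \{(n,n)\}, \qquad \MC{I}_{2n-1} = \{(n-1,n-1)\},
\]
\[
\MC{I}_{2n-2} = \{(n-2,n-2),\,(n,n-1)\}, \qquad \MC{I}_{2n-3} = \{(n-3,n-3),\,(n-1,n-2)\}.
\]
Figure~\ref{0427-0} makes this transparent, since $\MC{I}_j$ is just the ``antidiagonal'' $i - 2l = n - j$ inside the triangular region depicted. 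Feeding each list into $\alpha_j = \sum_{(i,l)\in\MC{I}_j} 2^{n-i} d_i \C{i}{l}$ and simplifying with $d_n = 1$ and $d_{n-1} = 0$ gives the four formulas simultaneously; statement (i) then restates as the monicity of $\Psi(x)$.

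There is essentially no obstacle beyond careful bookkeeping: one has to verify that the bound $i \leq n$ eliminates candidate pairs such as $(n+1, n)$ for $j = 2n-1$ or $(n+1, n-1)$ for $j = 2n-3$, so that the lists above are complete, and that the vanishing of $d_{n-1}$ is what kills the would-be extra term $2(n-1)d_{n-1}$ in (iv). Once the enumeration is confirmed, each of (i)--(iv) is a single line of arithmetic, so the proof amounts to writing out Lemma~\ref{0415-01} for these four values of $j$.
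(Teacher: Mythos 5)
Your proposal is correct and follows essentially the same route as the paper: enumerate $\MC{I}_j$ for $j=2n,\dots,2n-3$ (the paper does this by intersecting the line $l=\tfrac{i}{2}+\tfrac{n-j}{2}$ of slope $\tfrac12$ with the triangular region of Figure~\ref{0427-0}, which is your ``antidiagonal'' observation), then substitute into Lemma~\ref{0415-01} using $d_n=1$ and $d_{n-1}=0$. Your explicit remark that $d_{n-1}=0$ kills the term $2(n-1)d_{n-1}$ in (iv) is a detail the paper leaves implicit, but there is no substantive difference.
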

\begin{proof}
First, we claim that
\[ \MC{I}_j =
\begin{cases}
\{ (n,n) \} \quad &\text{if $j = 2n$,} \\
\{ (n-1, n-1) \} \quad &\text{if $j = 2n-1$,} \\
\{ (n, n-1), (n-2, n-2) \} \quad &\text{if $j = 2n-2$,} \\
\{ (n-1, n-2), (n-3, n-3) \} \quad &\text{if $j = 2n-3$.}
\end{cases}
\]
We show only the case $j = 2n-3$.
The other cases can be shown in the same manner.
The set $\MC{I}_{2n-3}$ is illustrated in Figure~\ref{0429-0}.
Let the line $L$ be $l=\frac{i}{2} + \frac{n-3}{2}$.
The line $L$ has the slope $1/2$,
so if $(p, q) \in \MC{I}_{2n-3}$ and $(p-2, q-1) \in \{ (i,l) \in \MB{Z} \times \MB{Z} \mid 0 \leq l \leq i \leq n \}$,
then $(p-2, q-1) \in \MC{I}_{2n-3}$.
Repeating this argument, we have $\MC{I}_{2n-3} = \{ (n-1,n-2), (n-3,n-3) \}$.
By Lemma~\ref{0415-01}, the statement is obtained.
\end{proof}

\begin{lem} \label{0415-03}
{\it
If $G$ is $k$-regular,
then we have
\[ g(x) = \frac{1}{k^n} f(kx). \]
In particular, $d_i = \frac{c_i}{k^{n-i}}$ for $i \in \{0,1,\dots, n \}$.
}
\end{lem}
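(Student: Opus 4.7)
The plan is to exploit the already-noted fact that in the $k$-regular case the normalized matrix is a scalar multiple of the Hermitian adjacency matrix, and then push the scalar $1/k$ through the determinant defining $g$.

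First I would invoke the remark immediately following the definition of $\NH_{\eta}$: when $G$ is $k$-regular one has $\NH_{\eta}(G) = \frac{1}{k} H_{\eta}(G)$. Substituting this into the definition of $g$ gives
\begin{align*}
g(x) = \det\!\marukakko{xI - \tfrac{1}{k}H_{\eta}(G)}
     = \det\!\marukakko{\tfrac{1}{k}\bigl(kx I - H_{\eta}(G)\bigr)}
     = \tfrac{1}{k^n}\det(kx I - H_{\eta}(G))
     = \tfrac{1}{k^n} f(kx),
\end{align*}
using multilinearity of the determinant on an $n \times n$ matrix to pull out the factor $1/k$ from each of the $n$ rows (or columns). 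This establishes the first assertion.

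Next I would extract the coefficient identity by expanding both sides as polynomials in $x$. On the right,
\[
\tfrac{1}{k^n} f(kx) = \tfrac{1}{k^n} \sum_{i=0}^{n} c_i (kx)^i = \sum_{i=0}^{n} \tfrac{c_i}{k^{n-i}} x^i,
\]
and matching this with $g(x) = \sum_{i=0}^{n} d_i x^i$ term by term yields $d_i = c_i/k^{n-i}$ for each $i \in \{0,1,\dots,n\}$.

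There is essentially no obstacle here; the only thing to be careful about is the power of $k$ that comes out of the determinant (it must be $k^n$, matching the size of the matrix, not the power $n-i$ that eventually appears in the coefficient formula — those arise from the two separate scalings on the two sides of the identity). Everything else is a direct computation from the $k$-regular identity $\NH_{\eta} = \frac{1}{k}H_{\eta}$.
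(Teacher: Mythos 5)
Your proposal is correct and follows essentially the same route as the paper: substitute the $k$-regular identity $\NH_{\eta} = \frac{1}{k}H_{\eta}$ into the definition of $g$, pull the scalar $\frac{1}{k}$ out of the $n\times n$ determinant to get the factor $k^{-n}$, and then compare coefficients of $\frac{1}{k^n}f(kx)$ with those of $g(x)$. No gaps; your cautionary remark about distinguishing the determinant's $k^n$ from the coefficient exponent $k^{n-i}$ is exactly the right point of care.
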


\begin{proof}
Since $\NH_{\eta}(G) = \frac{1}{k}H_{\eta}(G)$,
we have
\begin{align*}
g(x) &= \det(xI - \NH_{\eta}(G)) \\
&= \det(xI - \frac{1}{k}H_{\eta}(G)) \\
&= \frac{1}{k^n} \det(kx I - H_{\eta}(G)) = \frac{1}{k^n} f(kx).
\end{align*}
Therefore,
\[
\sum_{i=0}^n d_i x^i = g(x) = \frac{1}{k^n} f(kx) = \frac{1}{k^n} \sum_{i=0}^n c_i (kx)^i
 = \sum_{i=0}^n \frac{c_i}{k^{n-i}}x^i.
\]
We have $d_i = \frac{c_i}{k^{n-i}}$ for $i \in \{0,1,\dots, n \}$.
\end{proof}

The coefficients of $H_{\eta}$ have combinatorial implications.
See Corollary~3.1 in \cite{MKS} and Theorem~3.3 in \cite{GM} for example.
In this paper, we mainly focus on $c_{n-2}$ and $c_{n-3}$.
The following lemma has been shown by Guo and Mohar \cite{GM} for the case $\eta = \frac{\pi}{2}$.
The proof is valid for general $\eta$.

\begin{lem} \label{0415-04}
{\it
We have $c_{n-2} = -|E(G^{\pm})|$.
}
\end{lem}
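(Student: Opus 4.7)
The plan is to use the standard expression of the coefficients of the characteristic polynomial of a matrix in terms of its principal minors. Writing $f(x) = \prod_{i=1}^{n} (x - \lambda_i)$ and expanding elementary symmetric polynomials, one obtains
\[ c_{n-k} = (-1)^k \sum_{\substack{S \subseteq V(G) \\ |S| = k}} \det H_{\eta}[S], \]
where $H_{\eta}[S]$ denotes the principal submatrix of $H_{\eta}(G)$ indexed by $S$. Specializing to $k = 2$ gives $c_{n-2} = \sum_{|S|=2} \det H_{\eta}[S]$.

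Next I would compute $\det H_{\eta}[S]$ for a two-element set $S = \{x, y\}$. Since the diagonal entries of $H_{\eta}(G)$ vanish, we have
\[ \det H_{\eta}[\{x,y\}] = - (H_{\eta})_{x,y} (H_{\eta})_{y,x}. \]
The matrix $H_{\eta}(G)$ is Hermitian, so $(H_{\eta})_{y,x} = \overline{(H_{\eta})_{x,y}}$, and thus $\det H_{\eta}[\{x,y\}] = -|(H_{\eta})_{x,y}|^2$. Inspecting the definition of $H_{\eta}$, the entry $(H_{\eta})_{x,y}$ takes one of the values $0,\,1,\,e^{\eta i},\,e^{-\eta i}$, all of which have modulus $0$ or $1$. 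Therefore $-|(H_{\eta})_{x,y}|^2$ equals $-1$ exactly when $(x,y) \in \MC{A}^{\pm}$, and $0$ otherwise.

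Summing this contribution over all unordered pairs $\{x,y\}$ yields
\[ c_{n-2} = -\#\bigl\{\{x,y\} \subseteq V(G) \;\big|\; (x,y) \in \MC{A}^{\pm}\bigr\} = -|E(G^{\pm})|, \]
since the right-hand counting set is exactly the edge set of the underlying graph $G^{\pm}$. There is no real obstacle here — the main thing to be careful about is that the three possible nonzero values of $H_{\eta}$ all have absolute value one, so the $\eta$-dependence disappears and the count reduces purely to the combinatorial edge count; this is also why the proof is \emph{valid for general $\eta$} rather than only $\eta = \pi/2$, as remarked before the statement.
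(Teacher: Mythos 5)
Your proof is correct, but it takes a genuinely different route from the paper. You expand the characteristic polynomial via principal minors, $c_{n-2} = \sum_{|S|=2}\det H_{\eta}[S]$, and observe that each $2\times 2$ principal minor equals $-|(H_{\eta})_{x,y}|^2$, which is $-1$ precisely when $\{x,y\}$ is an edge of $G^{\pm}$ and $0$ otherwise; the sign conventions you use ($c_{n-k}=(-1)^k e_k$ together with $e_k = \sum_{|S|=k}\det H_\eta[S]$) are right. The paper instead argues spectrally: starting from $0=\bigl(\sum_i\lambda_i\bigr)^2=\Tr(H_{\eta}^2)+2c_{n-2}$ and computing $\Tr(H_{\eta}^2)=\sum_{x}\deg_{G^{\pm}}x=2|E(G^{\pm})|$ from the Hermitian property. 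The two arguments hinge on the same fact --- that every nonzero entry of $H_{\eta}$ has modulus one, so the $\eta$-dependence cancels in $|(H_{\eta})_{x,y}|^2$ --- but your version avoids eigenvalues entirely and is slightly more self-contained, while the paper's power-sum computation is the one that scales naturally to the companion result $c_{n-3}=-2t$ (Lemma~\ref{0426-01}), where it reuses $\Tr(A^3)=6t$ rather than enumerating $3\times 3$ minors.
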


\begin{proof}
Let $H_{\eta} = H_{\eta}(G)$.
Since $H_{\eta}$ is Hermitian,
\begin{equation} \label{0413-1}
(H_{\eta}^2)_{x,x} = \sum_{z \in V} (H_{\eta})_{x,z} (H_{\eta})_{z,x}
= \sum_{z \in V} |(H_{\eta})_{x,z}|^2 
= \deg_{G^{\pm}} x
\end{equation}
for any $x \in V$.
Let $\lambda_1, \dots, \lambda_n$ be the eigenvalues of $H_{\eta}$.
Remark that
\begin{equation} \label{0413-2}
c_{n-2} = \sum_{1 \leq i < j \leq n} \lambda_i \lambda_j.
\end{equation}
We have
\begin{align*}
0 &= \marukakko{ \sum_{i = 1}^n \lambda_i }^2 \\
&= \sum_{i = 1}^n \lambda_i^2 + 2 \sum_{1 \leq i < j \leq n} \lambda_i \lambda_j \\
&= \Tr(H_{\eta}^2) + 2c_{n-2} \tag{by (\ref{0413-2})}\\
&= \sum_{x \in V} \deg_{G^{\pm}} x + 2c_{n-2} \tag{by (\ref{0413-1})} \\
&= 2|E(G^{\pm})| + 2c_{n-2}. \tag{by Handshaking Lemma}
\end{align*}
Therefore, we have $c_{n-2} = -|E(G^{\pm})|$.
\end{proof}

\begin{pro} \label{0416-1}
{\it
If $G$ is $k$-regular,
then we have $\alpha_{2n-2} = n-\frac{2n}{k}$.
}
\end{pro}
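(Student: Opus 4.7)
The plan is to combine three results already established in the excerpt: Corollary~\ref{0415-02}(iii), Lemma~\ref{0415-03}, and Lemma~\ref{0415-04}. The chain of substitutions is direct, with no genuine obstacle; the ``work'' has all been done in the preceding lemmas.

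First, I would invoke Corollary~\ref{0415-02}(iii), which gives the identity $\alpha_{2n-2} = n + 4 d_{n-2}$ with no regularity hypothesis. Thus the task reduces to computing $d_{n-2}$ under the assumption that $G$ is $k$-regular.

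Next, I would apply Lemma~\ref{0415-03} with $i = n-2$, which yields $d_{n-2} = c_{n-2}/k^{2}$. By Lemma~\ref{0415-04}, $c_{n-2} = -|E(G^{\pm})|$, and since $G$ is $k$-regular with $n$ vertices, the handshaking lemma gives $|E(G^{\pm})| = nk/2$. Hence $d_{n-2} = -n/(2k)$.

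Substituting into the expression from Corollary~\ref{0415-02}(iii),
\begin{equation*}
\alpha_{2n-2} = n + 4 \cdot \left(-\frac{n}{2k}\right) = n - \frac{2n}{k},
\end{equation*}
which is the claimed identity. The main (and only) care point is merely to track the correct power of $k$ when specializing Lemma~\ref{0415-03}, namely $k^{n - (n-2)} = k^2$; everything else is routine.
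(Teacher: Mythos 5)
Your proposal is correct and follows exactly the same chain as the paper's own proof: Corollary~\ref{0415-02}(iii) to reduce to $d_{n-2}$, Lemma~\ref{0415-03} to get $d_{n-2}=c_{n-2}/k^2$, Lemma~\ref{0415-04} for $c_{n-2}=-|E(G^{\pm})|$, and the handshaking lemma to conclude. No differences worth noting.
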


\begin{proof}
Indeed,
\begin{align*}
\alpha_{2n-2}
&= n + 4 d_{n-2} \tag{by Corollary~\ref{0415-02}~(iii)} \\
&= n + \frac{4 c_{n-2}}{k^2} \tag{by Lemma~\ref{0415-03}} \\
&= n - \frac{4 |E(G^{\pm})|}{k^2} \tag{by Lemma~\ref{0415-04}} \\
&= n - \frac{2n}{k}. \tag{by Handshaking Lemma}
\end{align*}
\end{proof}

It is important to note that the above argument is independent of $\eta$.

\section{Rings to which coefficients belong} \label{0504-4}

In this section,
we investigate the ring containing the coefficients of the inherited factor of $U_{\theta}$ for periodic mixed graphs.
Let $\alpha \in \MB{C}$.
Recall that $\MB{Z}[\alpha] = \{ p(\alpha) \mid p(x) \in \MB{Z}[x] \}$,
and $\MB{Q}(\alpha)$ is the smallest subfield of $\MB{C}$ containing $\MB{Q}$ and $\alpha$.
Explicitly,
it is described as
\[ \MB{Q}(\alpha) = \tyuukakko{ \frac{p(\alpha)}{q(\alpha)} \, \middle | \, p(x), q(x) \in \MB{Q}[x],
q(\alpha) \neq 0 }. \]
We use the same notation as in Section~\ref{0424-01}.

\begin{lem} \label{0415-07}
{\it
We have $g(x) \in \MB{Q}(e^{i \eta})[x]$.
}
\end{lem}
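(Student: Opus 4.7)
The plan is to reduce the normalized characteristic polynomial to a polynomial built from $H_\eta$ itself, whose entries visibly lie in $\mathbb{Q}(e^{i\eta})$, and then divide by a positive integer.

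First I would observe that, by the definition of $H_\eta$, every entry of $H_\eta(G)$ lies in the set $\{0, 1, e^{i\eta}, e^{-i\eta}\}$. Since $e^{-i\eta} = 1/e^{i\eta}$ belongs to the field $\mathbb{Q}(e^{i\eta})$, every entry of $H_\eta(G)$ lies in $\mathbb{Q}(e^{i\eta})$.

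Next I would rewrite the characteristic polynomial so as to remove the square roots introduced by $D^{-1/2}$. Using $\tilde{H}_\eta = D^{-1/2} H_\eta D^{-1/2}$ and multiplicativity of determinants,
\begin{align*}
g(x) = \det(xI - \tilde{H}_\eta)
&= \det\bigl(D^{-1/2}(xD - H_\eta)D^{-1/2}\bigr) \\
&= \frac{1}{\det D}\,\det(xD - H_\eta).
\end{align*}
Since $D$ is a positive diagonal integer matrix, $\det D \in \mathbb{Z}_{>0} \subset \mathbb{Q}(e^{i\eta})^{\times}$.

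Finally I would expand $\det(xD - H_\eta)$ via the Leibniz formula (or by regarding it as a polynomial in $x$ whose coefficients are signed sums of principal minors of $H_\eta$ scaled by products of degrees). Each such coefficient is a $\mathbb{Z}$-linear combination of products of entries of $H_\eta$ and integer entries of $D$, hence lies in $\mathbb{Z}[e^{i\eta}, e^{-i\eta}] \subset \mathbb{Q}(e^{i\eta})$. Dividing by $\det D$ preserves membership in $\mathbb{Q}(e^{i\eta})$, giving $g(x) \in \mathbb{Q}(e^{i\eta})[x]$.

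There is no real obstacle here; the only mild subtlety is ensuring that the square roots hidden in $\tilde{H}_\eta$ do not survive in the characteristic polynomial, which is handled cleanly by the factorization $\tilde{H}_\eta = D^{-1/2} H_\eta D^{-1/2}$ together with the conjugation trick $\det(xI - D^{-1/2} H_\eta D^{-1/2}) = \det(xD - H_\eta)/\det D$.
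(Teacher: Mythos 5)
Your proof is correct and uses essentially the same idea as the paper: eliminate the square roots in $\NH_{\eta}=D^{-1/2}H_{\eta}D^{-1/2}$ by a conjugation/determinant manipulation (the paper passes to the similar matrix $D^{-1}H_{\eta}$, you write $\det(xI-\NH_{\eta})=\det(xD-H_{\eta})/\det D$), after which all coefficients visibly lie in $\MB{Q}(e^{i\eta})$.
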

\begin{proof}
We have $(D^{-1}H_{\eta}(G))_{x,y} \in \MB{Q}(e^{i \eta})$ for any $x, y \in V$.
Thus,
$g(x) =  \det(xI - \NH_{\eta}(G)) = \det(xI - D^{-1}H_{\eta}(G)) \in \MB{Q}(e^{i \eta})[x]$.
\end{proof}

Let $\alpha \in \MB{C}$.
We say that $\alpha$ is an {\it algebraic integer}
if there exists a monic polynomial $p(x) \in \MB{Z}[x]$ such that $p(\alpha) = 0$.
Let $\Omega$ denote the set of algebraic integers.
We recall some basic facts on the algebraic integers.
Let $\zeta_m = e^{\frac{2\pi}{m}i}$.

\begin{pro}[\cite{J, L}] \label{0424-02}
{\it
We have the following.
\begin{enumerate}[(i)]
\item $\Omega$ is a subring of $\MB{C}$;
\item $\Omega \cap \MB{Q} = \MB{Z}$; and
\item $\Omega \cap \MB{Q}(\zeta_m) = \MB{Z}[\zeta_m]$.
\end{enumerate}
}
\end{pro}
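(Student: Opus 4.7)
The plan is to treat (i), (ii), (iii) in order using standard algebraic number theory. For (i), I would use the module-theoretic characterization: $\alpha \in \MB{C}$ is an algebraic integer if and only if $\MB{Z}[\alpha]$ is finitely generated as a $\MB{Z}$-module, equivalently if and only if $\alpha$ lies in a finitely generated $\MB{Z}$-submodule $M$ of $\MB{C}$ with $\alpha M \subseteq M$. Given $\alpha, \beta \in \Omega$ of degrees $d_{\alpha}, d_{\beta}$ respectively, the module $\MB{Z}[\alpha, \beta]$ is generated as a $\MB{Z}$-module by the $d_{\alpha} d_{\beta}$ monomials $\alpha^i \beta^j$ with $i < d_{\alpha}$ and $j < d_{\beta}$, hence is finitely generated. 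Since $\alpha \pm \beta$ and $\alpha \beta$ lie in $\MB{Z}[\alpha, \beta]$ and this module is stable under multiplication by each of them, each of $\alpha \pm \beta$ and $\alpha\beta$ is an algebraic integer, giving (i).

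For (ii), the rational root theorem suffices. If $\alpha = p/q \in \MB{Q}$ is in lowest terms and satisfies $x^n + a_{n-1} x^{n-1} + \cdots + a_0 = 0$ with $a_j \in \MB{Z}$, clearing denominators gives $p^n + a_{n-1} p^{n-1} q + \cdots + a_0 q^n = 0$, so $q$ divides $p^n$; since $\gcd(p, q) = 1$ we conclude $q = \pm 1$ and $\alpha \in \MB{Z}$. The reverse inclusion $\MB{Z} \subseteq \Omega \cap \MB{Q}$ is trivial.

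For (iii), the inclusion $\MB{Z}[\zeta_m] \subseteq \Omega \cap \MB{Q}(\zeta_m)$ follows immediately from (i) together with the fact that $\zeta_m$ satisfies $x^m - 1 = 0$. The reverse inclusion is the classical theorem identifying $\MB{Z}[\zeta_m]$ as the full ring of integers of the $m$-th cyclotomic field. The standard route writes any $\alpha \in \Omega \cap \MB{Q}(\zeta_m)$ uniquely as $\alpha = \sum_{k=0}^{\phi(m)-1} r_k \zeta_m^k$ with $r_k \in \MB{Q}$, uses the trace form on $\MB{Q}(\zeta_m)/\MB{Q}$ (whose values on algebraic integers lie in $\MB{Z}$ by (ii)) to deduce $d \cdot r_k \in \MB{Z}$, where $d$ is the discriminant of the basis $\{1, \zeta_m, \dots, \zeta_m^{\phi(m)-1}\}$, and then performs a prime-by-prime analysis exploiting the computation that $d$ divides a power of $m$ together with the Eisenstein-type behavior of the minimal polynomial of $\zeta_m$ at primes dividing $m$.

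The main obstacle is (iii): parts (i) and (ii) are one- or two-line arguments from the module characterization and the rational root theorem, but (iii) is a genuine theorem requiring a nontrivial discriminant and ramification computation in the cyclotomic field. In practice I would not reprove this but cite a standard text, which is exactly how the proposition is presented in the excerpt via \cite{J, L}.
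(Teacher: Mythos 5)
Your proposal is correct, and it matches the paper's treatment: the paper offers no proof of this proposition at all, simply citing the standard texts \cite{J, L}, which is exactly what you conclude one should do. Your sketches of (i) via the finitely-generated-module characterization and (ii) via the rational root theorem are sound, and you correctly isolate (iii) --- that $\MB{Z}[\zeta_m]$ is the full ring of integers of $\MB{Q}(\zeta_m)$ --- as the only genuinely nontrivial ingredient requiring the discriminant and ramification analysis.
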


\begin{pro} \label{0416-2}
{\it
If $G$ is periodic, then the following hold.
  \begin{enumerate}[(i)]
  \item $\Psi(x) \in \Omega[x]$;
  \item If $\eta \in \MB{Q}\pi$,
  there exists a positive integer $m$ such that $\Psi(x) \in (\MB{Z}[\zeta_m])[x]$; and
  \item If $\eta \in \{0, \pi/2 \}$, then $\Psi(x) \in \MB{Z}[x]$.
  \end{enumerate}
}
\end{pro}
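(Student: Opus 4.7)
The plan is to exploit the characterization of periodicity in Lemma~\ref{13}: if $U_{\theta}^{\tau}=I$, then every eigenvalue of $U_{\theta}$ is a $\tau$-th root of unity, hence an algebraic integer. By the factorization $\det(\lambda I_{2m}-U_{\theta})=(\lambda^{2}-1)^{m-n}\Psi(\lambda)$ cited before Section~\ref{0424-01}, the roots of the inherited factor $\Psi$ form a sublist of $\Spec(U_{\theta})$, so they too are roots of unity, and in particular lie in $\Omega$. This single observation drives all three parts; the rest is careful bookkeeping on which subring of $\MB{C}$ the coefficients are forced into.

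For (i), I would combine the above with Corollary~\ref{0415-02}: $\Psi$ is monic, so $\Psi(x)=\prod_{i}(x-\lambda_{i})$ with each $\lambda_{i}\in\Omega$, and since $\Omega$ is a ring by Proposition~\ref{0424-02}(i), each coefficient, being up to sign an elementary symmetric function of the $\lambda_{i}$, lies in $\Omega$.

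For (ii), I would localize the coefficients into a cyclotomic field. Writing $\eta=(p/q)\pi$, one has $e^{i\eta}=\zeta_{2q}^{\,p}\in\MB{Q}(\zeta_{m})$ for $m=2q$. Lemma~\ref{0415-07} gives $g(x)\in\MB{Q}(e^{i\eta})[x]\subseteq\MB{Q}(\zeta_{m})[x]$, and because the formula in Lemma~\ref{0415-01} expresses each $\alpha_{j}$ as a $\MB{Z}$-linear combination of the $d_{i}$, this upgrades to $\alpha_{j}\in\MB{Q}(\zeta_{m})$. Intersecting with (i) via Proposition~\ref{0424-02}(iii) yields $\alpha_{j}\in\Omega\cap\MB{Q}(\zeta_{m})=\MB{Z}[\zeta_{m}]$.

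For (iii), I would specialize and pull the coefficients all the way down to $\MB{Z}$. For $\eta=0$, $\MB{Q}(e^{i\eta})=\MB{Q}$, so the argument of (ii) already places $\alpha_{j}\in\MB{Q}$, and Proposition~\ref{0424-02}(ii) combined with (i) gives $\alpha_{j}\in\Omega\cap\MB{Q}=\MB{Z}$. The case $\eta=\pi/2$ is the main subtlety of the whole proof: a priori we only get $\MB{Q}(e^{i\eta})=\MB{Q}(i)$, and to descend further one must invoke that $H_{\pi/2}$, and hence $\NH_{\pi/2}$, is Hermitian with real eigenvalues, so $g\in\MB{R}[x]$ and therefore $g\in\MB{Q}(i)[x]\cap\MB{R}[x]=\MB{Q}[x]$; Lemma~\ref{0415-01} then forces $\Psi\in\MB{Q}[x]$, and the same $\Omega\cap\MB{Q}=\MB{Z}$ squeeze finishes. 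The delicate point to keep track of is precisely this interplay between the Hermitian structure and the cyclotomic-integer arithmetic; the remainder of the argument is a routine descent.
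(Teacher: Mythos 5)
Your proposal is correct and follows essentially the same route as the paper: periodicity forces the eigenvalues of $U_{\theta}$ (hence the roots of the monic $\Psi$) to be roots of unity and thus algebraic integers, and the three parts are then obtained by intersecting $\Omega$ with $\MB{C}$, $\MB{Q}(\zeta_m)$, and $\MB{Q}$ respectively via Proposition~\ref{0424-02}, with the same Hermitian/real-eigenvalue descent $\MB{Q}(i)\cap\MB{R}=\MB{Q}$ handling $\eta=\pi/2$. Your only addition is making explicit that the roots of $\Psi$ form a sublist of $\Spec(U_{\theta})$ via the spectral factorization, which the paper leaves implicit.
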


\begin{proof}
(i) By Lemma~\ref{13},
there exists $\tau \in \MB{N}$ such that $\lambda^{\tau} = 1$
for any eigenvalue $\lambda$ of $U_{\theta}$.
Any eigenvalue $\lambda$ of $U_{\theta}$
is a root of the monic polynomial $p(x) = x^{\tau} - 1 \in \MB{Z}[x]$,
so $\lambda \in \Omega$.
Let $\lambda_1, \lambda_2, \dots, \lambda_{2n}$ be roots of $\Psi(x)$.
Since $\Psi(x)$ is monic and $\Omega$ is a ring,
\[ \Psi(x) = (x-\lambda_1)(x-\lambda_2) \cdots (x - \lambda_{2n}) \in \Omega[x]. \]

(ii) 
Let $\eta = \frac{2l}{m}\pi$, where $l$ and $m$ are coprime.
From Lemma~\ref{0415-07},
we have $g(x) \in \MB{Q}(\zeta_m)[x]$.
Lemma~\ref{0415-01} implies $\Psi(x) \in \MB{Q}(\zeta_m)[x]$.
From (i) and Proposition~\ref{0424-02}~(iii),
we have $\Psi(x) \in (\Omega \cap \MB{Q}(\zeta_m))[x] = (\MB{Z}[\zeta_m])[x]$.

(iii) If $\eta = 0$, then clearly $g(x) \in \MB{Q}[x]$.
If $\eta = \pi/2$, then it also holds that $g(x) \in \MB{Q}[x]$.
Indeed,
the eigenvalues of $\NH_{\frac{\pi}{2}}$ are real numbers,
so $g(x) \in \MB{R}[x]$.
By Lemma~\ref{0415-07},
we have $g(x) \in \MB{Q}(e^{\frac{\pi}{2}i})[x] = \MB{Q}(i)[x]$.
Therefore,
$g(x) \in (\MB{R} \cap \MB{Q}(i))[x] = \MB{Q}[x]$.
Now, let $\eta \in \{0, \pi/2 \}$.
Lemma~\ref{0415-01} implies 
$\Psi(x) \in \MB{Q}[x]$.
From (i), we have $\Psi(x) \in (\Omega \cap \MB{Q})[x] = \MB{Z}[x]$.
\end{proof}

We note that the claim in Proposition~\ref{0416-2}~(iii) for $\eta = 0$ has been substantially clarified
by Higuchi et al \cite{HKSS2017}.
We here derived a more general statement in a different way to theirs.
The above proposition and the coefficient $\alpha_{2n-2}$ of the inherited factor
yield a very simple necessary criterion for mixed graphs to be periodic,
which is one of the main results.

\begin{cor} \label{0416-3}
{\it
Let $G$ be a mixed graph on $n$ vertices equipped with an $\eta$-function $\theta$.
If $G$ is $k$-regular and periodic,
then we have $\frac{2n}{k} \in \MB{Z}$.
}
\end{cor}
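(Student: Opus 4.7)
The plan is to combine three ingredients already established: the explicit formula for $\alpha_{2n-2}$ in the regular case, the algebraic-integer constraint on coefficients of $\Psi(x)$ for periodic graphs, and the standard fact $\Omega \cap \MB{Q} = \MB{Z}$. The desired conclusion is purely about a single coefficient, so no further machinery should be needed.

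First I would invoke Proposition~\ref{0416-1}, which gives
\[ \alpha_{2n-2} = n - \frac{2n}{k}. \]
In particular $\alpha_{2n-2} \in \MB{Q}$, since $n$ and $k$ are positive integers. Next, I would apply Proposition~\ref{0416-2}~(i): since $G$ is periodic, $\Psi(x) \in \Omega[x]$, and therefore every coefficient of $\Psi$, including $\alpha_{2n-2}$, lies in the ring $\Omega$ of algebraic integers.

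Combining these, $\alpha_{2n-2} \in \Omega \cap \MB{Q}$. By Proposition~\ref{0424-02}~(ii), this intersection is $\MB{Z}$, so $n - \frac{2n}{k} \in \MB{Z}$, which immediately forces $\frac{2n}{k} \in \MB{Z}$.

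The argument is essentially a one-line chase through the already assembled lemmas: the work was front-loaded into identifying $\alpha_{2n-2}$ combinatorially (Proposition~\ref{0416-1}) and arithmetically (Proposition~\ref{0416-2}~(i)), so there is no real obstacle here; the only subtlety to double-check is that $n - 2n/k$ is genuinely rational, which is automatic, and that the algebraic-integer ring property applies coefficient-wise to a monic polynomial factored over $\Omega$, which is exactly what Proposition~\ref{0416-2}~(i) provides.
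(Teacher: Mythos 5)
Your proof is correct and follows exactly the same route as the paper: Proposition~\ref{0416-1} gives $\alpha_{2n-2}=n-\frac{2n}{k}\in\MB{Q}$, Proposition~\ref{0416-2}~(i) puts it in $\Omega$, and $\Omega\cap\MB{Q}=\MB{Z}$ finishes the argument. No gaps.
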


\begin{proof}
Let $\Psi(x)$ be the inherited factor of the time evolution matrix $U_{\theta}$ of $G$.
Display as $\Psi(x) = \sum_{i=0}^{2n} \alpha_i x^i$.
By Proposition~\ref{0416-1}, $\alpha_{2n-2} = n - \frac{2n}{k} \in \MB{Q}$.
By Proposition~\ref{0416-2}~(i), $\alpha_{2n-2} \in \Omega$.
Thus, $n - \frac{2n}{k} = \alpha_{2n-2} \in \Omega \cap \MB{Q} = \MB{Z}$.
We have $\frac{2n}{k} \in \MB{Z}$.
\end{proof}

\subsection{Undirected graphs}

We give a more developed consideration to undirected graphs.
Note that if $\G$ is an undirected graph,
then $H_{\eta}(\G)$ is the ordinary adjacency matrix for any $\eta$.
In addition, for an arbitrary $\eta$-function $\theta$,
we have $U_{\theta}(\G) = U_{\theta_0}(\G)$,
where $\theta_0$ is the $0$-function.
We continue to use the same notation as in Section~\ref{0424-01}.

\begin{lem} \label{0426-01}
{\it
Let $\G$ be an undirected graph,
and let $t$ be the number of triangles in $\G$.
Then we have $c_{n-3} = -2t$.
}
\end{lem}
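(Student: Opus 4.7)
The plan is to exploit the standard expansion of the characteristic polynomial in terms of principal minors of the adjacency matrix. Since $\G$ is undirected, $H_{\eta}(\G)$ coincides with the ordinary $\{0,1\}$-adjacency matrix $A$ regardless of $\eta$, so the problem reduces to a computation with $A$. Writing $f(x) = \det(xI - A) = \prod_{i=1}^{n}(x - \lambda_i)$ and expanding as $\sum_{k=0}^{n}(-1)^{k} e_{k}(\lambda_1,\dots,\lambda_n)\, x^{n-k}$, one has $c_{n-k} = (-1)^{k} e_{k}(\lambda_1,\dots,\lambda_n)$. For $k=3$, this gives
\[
c_{n-3} = -e_{3}(\lambda_1,\dots,\lambda_n) = -\sum_{\substack{S \subseteq V(\G) \\ |S|=3}} \det(A_{S}),
\]
where $A_S$ is the principal submatrix of $A$ indexed by $S$, using the classical identity identifying the elementary symmetric function of eigenvalues with the sum of principal minors.

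Next I would evaluate $\det(A_S)$ for each $3$-subset $S=\{u,v,w\}$ of vertices. Since $A$ has zero diagonal and symmetric $\{0,1\}$ entries, the submatrix has the form
\[
A_{S} = \begin{bmatrix} 0 & a & b \\ a & 0 & c \\ b & c & 0 \end{bmatrix},
\]
whose determinant is $2abc$ with $a,b,c \in \{0,1\}$. Hence $\det(A_{S}) = 2$ when $\{u,v,w\}$ induces a triangle, and $\det(A_S) = 0$ otherwise. Summing over all $3$-subsets yields $\sum_S \det(A_S) = 2t$, and therefore $c_{n-3} = -2t$.

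There is essentially no obstacle here: the only point requiring care is the sign conventions in the principal-minor formula, which I would verify by expanding $\prod(x-\lambda_i)$ once at the start. A secondary check, if desired, can be done via Newton's identities together with $\Tr(A)=0$, $\Tr(A^2)=2|E(\G)|$ and $\Tr(A^3)=6t$, which produce the same value of $e_3$ and hence of $c_{n-3}$.
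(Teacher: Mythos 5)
Your proof is correct, but it takes a genuinely different route from the paper. The paper works with power sums: it expands $0=\bigl(\sum_i\lambda_i\bigr)^3$ into $\sum_i\lambda_i^3+3\sum_{i\neq j}\lambda_i^2\lambda_j+6\sum_{i<j<k}\lambda_i\lambda_j\lambda_k$, substitutes $\Tr(A^3)=6t$ and $\sum_{j\neq i}\lambda_j=-\lambda_i$, and solves for $c_{n-3}=-e_3$. You instead invoke the principal-minor expansion of the characteristic polynomial, $c_{n-3}=-\sum_{|S|=3}\det(A_S)$, and observe that a $3\times 3$ principal minor of a symmetric $\{0,1\}$-matrix with zero diagonal equals $2abc$, i.e.\ $2$ exactly on triangles. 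Both arguments ultimately rest on a combinatorial count (yours counts triangles directly through minors; the paper's hides the count inside $\Tr(A^3)=6t$, which is itself a closed-walk argument), and your signs check out: $c_{n-k}=(-1)^k e_k$ and $\det A_S=2abc$ are both right. Your route avoids the multinomial bookkeeping in cubing the trace and generalizes more readily to the Hermitian setting (where the $3\times3$ minor becomes $2\re(abc)$, recovering the weighted-triangle interpretation the paper alludes to via the cited results of Guo--Mohar and Mehatari--Kannan--Samanta); the paper's route has the advantage of reusing the same template as its proof of Lemma~\ref{0415-04} for $c_{n-2}$. Your suggested secondary check via Newton's identities is essentially the paper's proof.
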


\begin{proof}
Let $A$ be the adjacency matrix of $\G$.
Let $\lambda_1, \lambda_2, \dots, \lambda_n$ be the eigenvalues of $A$.
Remark that
\[
c_{n-3} = - \sum_{1 \leq i < j < k \leq n} \lambda_i \lambda_j \lambda_k.
\]
Thus,
\begin{align*}
0 &= \marukakko{ \sum_{i=1}^n \lambda_i }^3 \\
&= \sum_{i=1}^n \lambda_i^3 + 3\sum_{\substack{ i,j \\ i \neq j }} \lambda_i^2 \lambda_j + 6 \sum_{1 \leq i < j < k \leq n} \lambda_i \lambda_j \lambda_k \\
&= \Tr{A^3} + 3 \sum_{i=1}^n \lambda_i^2 \sum_{\substack{ j=1\\ j \neq i }}^n \lambda_j -6c_{n-3} \\
&= 6t + 3 \sum_{i=1}^n \lambda_i^2 (-\lambda_i) -6c_{n-3} \\
&= 6t - 3 \sum_{i=1}^n \lambda_i^3 -6c_{n-3} \\
&= 6t - 18t -6c_{n-3} \\
&= -12t - 6c_{n-3}.
\end{align*}
Therefore, we have $c_{n-3} = -2t$.
\end{proof}

\begin{pro} \label{0806-2}
{\it
Let $\G$ be an undirected graph with $t$ triangles.
If $\G$ is $k$-regular and periodic,
then we have $\frac{16t}{k^3} \in \MB{Z}$.
}
\end{pro}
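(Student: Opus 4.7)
The plan is to mirror exactly the proof strategy of Corollary~\ref{0416-3}, but shifted down by one coefficient: instead of extracting information from $\alpha_{2n-2}$, I would extract it from $\alpha_{2n-3}$, which is the next relevant coefficient computed in Corollary~\ref{0415-02}~(iv). The triangle count $t$ should enter via $c_{n-3}$ (Lemma~\ref{0426-01}), just as the edge count $|E(G^{\pm})|$ entered via $c_{n-2}$ in the previous corollary.

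First I would write $\Psi(x) = \sum_{i=0}^{2n} \alpha_i x^i$ for the inherited factor of the time evolution matrix $U_{\theta}$ of $\Gamma$. Using Corollary~\ref{0415-02}~(iv), I have $\alpha_{2n-3} = 8 d_{n-3}$. Then Lemma~\ref{0415-03} (applicable since $\Gamma$ is $k$-regular) gives $d_{n-3} = c_{n-3}/k^3$, and Lemma~\ref{0426-01} (since $\Gamma$ is an undirected graph with $t$ triangles) yields $c_{n-3} = -2t$. Combining these,
\[
\alpha_{2n-3} = \frac{8 c_{n-3}}{k^3} = -\frac{16 t}{k^3}.
\]

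Next, because $\Gamma$ is an undirected graph I would invoke Proposition~\ref{0416-2}~(iii) with $\eta = 0$ (the time evolution matrix $U_{\theta}$ for an undirected graph coincides with $U_{\theta_0}$ for the $0$-function, as remarked before Lemma~\ref{0426-01}). Periodicity therefore forces $\Psi(x) \in \MB{Z}[x]$, so in particular $\alpha_{2n-3} \in \MB{Z}$. Equating this with the expression above gives $-16t/k^3 \in \MB{Z}$, whence $\frac{16t}{k^3} \in \MB{Z}$, as desired.

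There is no real obstacle here: every ingredient has already been set up in the preceding sections. The only minor subtlety is confirming that Proposition~\ref{0416-2}~(iii) applies, which requires noting that for an undirected graph we may take $\eta = 0$ without loss of generality, so $\Psi(x)$ has rational (in fact integer) coefficients once periodicity is assumed. The rest is a straightforward chain of substitutions parallel to Proposition~\ref{0416-1} and Corollary~\ref{0416-3}.
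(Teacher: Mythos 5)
Your proposal is correct and follows essentially the same route as the paper: it computes $\alpha_{2n-3} = 8d_{n-3} = 8c_{n-3}/k^3 = -16t/k^3$ via Corollary~\ref{0415-02}~(iv), Lemma~\ref{0415-03}, and Lemma~\ref{0426-01}, then invokes Proposition~\ref{0416-2}~(iii) to conclude $\alpha_{2n-3}\in\MB{Z}$. The remark about taking $\eta=0$ for undirected graphs is a reasonable (and correct) extra justification that the paper leaves implicit.
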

\begin{proof}
We have
\begin{align*}
\alpha_{2n-3} &= 8d_{n-3} \tag{by Corollary~\ref{0415-02}~(iv)} \\
&= 8 \cdot \frac{c_{n-3}}{k^3} \tag{by Lemma~\ref{0415-03}} \\
&= -\frac{16t}{k^3} \tag{by Lemma~\ref{0426-01}}.
\end{align*}
Proposition~\ref{0416-2}~(iii) implies $\alpha_{2n-3} \in \MB{Z}$.
Therefore, we have the statement.
\end{proof}

\subsection{Periodic graphs with a prime number of vertices}

The condition in Corollary~\ref{0416-3} leads to an interesting fact
when the number of vertices in a regular graph is prime.

\begin{thm}
{\it
Let $\eta \in \MB{Q}\pi$,
and let $G$ be a $k$-regular weakly connected mixed graph with $p$ vertices,
where $p$ is a prime number.
Then the mixed graph $G$ is periodic if and only if $G^{\pm}$ is isomorphic to either
the cycle graph $C_p$ or the path graph $P_2$.
}
\end{thm}

\begin{proof}
Suppose $G$ is periodic.
We have $\frac{2p}{k} \in \MB{Z}$ by Corollary~\ref{0416-3}.
Since $p$ is prime and $k < p$, we have $k = 1,2$.
The mixed graph $G$ is weakly connected,
so $G^{\pm}$ is isomorphic to either $C_p$ or $P_2$.
Conversely,
mixed graphs whose underlying graphs are isomorphic to the cycle graphs or the path graphs
are periodic since $\eta \in \MB{Q}\pi$.
This is already shown in \cite{KSeYa}.
\end{proof}

\subsection{Examples}

In previous works,
several periodic regular undirected graphs in the sense of conventional Grover walks have been found.
We list the number of vertices, valency, and the number of triangles in those graphs.
See Table~\ref{0505-2}.
We can see that the graphs satisfy $\frac{2n}{k}, \frac{16t}{k^3} \in \MB{Z}$.

\begin{table}[h]
  \centering
  \begin{tabular}{|c|c|c|c|c|}
\hline
Graphs & $n$ & $k$ & $t$ & Ref. \\
\hline
\hline
$C_n$ & $n$ & $2$ & $0, 1$ & well-known  \\ \hline
$K_{k,k}$ & $2k$ & $k$ & $0$ & \cite{HKSS2017} \\ \hline
$K_{\lambda, \lambda, \lambda}$ & $3\lambda$ & $2\lambda$ & $\lambda^3$ & \cite{HKSS2017} \\ \hline
$H(3,3)$ & $27$ & $6$ & $27$ & \cite{Y2019}  \\ \hline
$H(4,2)$ & $16$ & $4$ & $0$ & \cite{Y2019} \\ \hline
\end{tabular}
 \caption{Periodic regular graphs known so far} \label{0505-2}
\end{table}

\section{Periodicity of mixed complete graphs} \label{0504-5}
We say that a mixed graph $G$ is a {\it mixed complete graph}
if $G^{\pm}$ is isomorphic to the complete graph.
In this section, we discuss the periodicity of mixed complete graphs.
In the case of undirected graphs,
there is only one complete graph for $n$.
On the other hand,
there are $3^{\frac{n(n-1)}{2}}$ distinct mixed complete graphs on any vertex set $V$ of size $n$.
Moreover, there are infinitely many ways to take $\eta$.
However, the mixed complete graphs are not periodic for any $n \geq 4$ and $\eta$.
We prove this fact by the previous arguments.

\begin{thm}
{\it
Let $G$ be a mixed complete graph equipped with an $\eta$-function $\theta$.
If $n \geq 4$, then $G$ is not periodic.
}
\end{thm}

\begin{proof}
Assume that $G$ is periodic.
Since $G$ is $(n-1)$-regular, we have $\frac{2n}{n-1} \in \MB{Z}$ by Corollary~\ref{0416-3}.
However, $\frac{2n}{n-1}$ is not integer for $n \geq 4$.
We have a contradiction.
\end{proof}

We supplement the case of $n = 2, 3$.
When $n = 2$, a mixed $K_2$ is a mixed path.
In \cite{KSeYa}, the authors show that the mixed paths are periodic for any $\eta$.
When $n = 3$, a mixed $K_3$ is a mixed cycle.
The authors in \cite{KSeYa} show that a mixed cycle is periodic if and only if $\eta \in \MB{Q}\pi$.

\section{Mixed strongly regular graphs} \label{0902-6}
We say that a mixed graph $G$ is a {\it mixed strongly regular graph with parameters $(n, k ,\lambda, \mu)$} if $G^{\pm}$ is a strongly regular graph with parameters $(n, k ,\lambda, \mu)$.
See \cite{BH, GR} for the definition and basis properties of strongly regular graphs.
Periodic undirected strongly regular graphs have been fully determined by Higuchi et al \cite{HKSS2017}.
In this section,
we consider the mixed graph analogy and an alternative proof of their result.
Before starting discussion, we quote a well-known fact.

\begin{lem}[\cite{BH}, Theorem~9.1.3 (iv)] \label{0805-2}
{\it
Let $\G$ be a strongly regular graph with parameters $(n, k ,\lambda, \mu)$.
If $\G$ has a non-integral eigenvalue,
then $(n, k ,\lambda, \mu) = (4t+1, 2t, t-1, t)$ for some integer $t$.
In particular, $\lambda - \mu = -1$.
}
\end{lem}


\begin{thm} \label{0803-1}
{\it
Let $G$ be a weakly connected mixed strongly regular graph with parameters $(n, k ,\lambda, \mu)$.
If $G$ is periodic,
then we have $(n, k ,\lambda, \mu) = (3\lambda, 2\lambda, \lambda, 2\lambda)$,
$(2k, k, 0, k)$, $(5,2,0,1)$ or  $(15, 6, 1, 3)$.
}
\end{thm}

\begin{proof}
We recall that $k(k-\lambda-1) = (n-k-1)\mu$.
Since $G$ is weakly connected, we have $\mu > 0$.
Thus
\begin{equation} \label{0803-2}
n = \frac{k^2 + (\mu-\lambda -1)k + \mu}{\mu}
\end{equation}
By Corollary~\ref{0416-3},
we have $\frac{2n}{k} = \frac{2}{\mu}(k+\mu-\lambda-1 + \frac{\mu}{k}) \in \MB{Z}$.
In particular, $2(k+\mu-\lambda-1 + \frac{\mu}{k}) \in \mu\MB{Z} \subset \MB{Z}$,
so $\frac{2\mu}{k} \in \MB{Z}$.
Since $0 < \mu \leq k$,
we have $0 < \frac{2\mu}{k} \leq 2$, i.e., $\frac{2\mu}{k} = 1, 2$.

We first consider the case $\frac{2\mu}{k} = 2$.
By (\ref{0803-2}), we have
\begin{equation} \label{0803-4} 
n - 2k + \lambda = 0.
\end{equation}
Then $\bar{\mu} = n - 2k + \lambda = 0$ is obtained
for the parameters $(n, \bar{k}, \bar{\lambda}, \bar{\mu})$ of the complement $\overline{G^{\pm}}$.
By Lemma~10.1 of \cite{GR},
there exists $m > 1$ such that $\overline{G^{\pm}} = mK_{\bar{k}+1} = mK_{n-k}$,
where $mK_a$ is the disjoint union of the $m$ complete graphs on $a$ vertices.
Taking the complements of both sides and comparing the number of vertices and valency,
we have
\begin{equation} \label{0803-3}
n=m(n-k)
\end{equation}
and $k=(m-1)(n-k)$.
From Corollary~\ref{0416-3} again,
we obtain $\frac{2n}{k} = 2 + \frac{2}{m-1} \in \MB{Z}$, that is, $m=2, 3$.
If $m=2$, then $n = 2k$ from (\ref{0803-3}), and $G^{\pm} = \overline{2K_{k}} = K_{k,k}$.
The parameters are $(2k, k, 0, k)$.
If $m=3$, then $n = \frac{3}{2}k$ from (\ref{0803-3}), and $k = 2\lambda$ from (\ref{0803-4}).
Thus we have $(n, k ,\lambda, \mu) = (3\lambda, 2\lambda, \lambda, 2\lambda)$.
Note that this graph is $G^{\pm} = \overline{3K_{\lambda}} = K_{\lambda, \lambda, \lambda}$.

Next, we consider the case $\frac{2\mu}{k}=1$.
Then (\ref{0803-2}) becomes
\begin{equation} \label{0805-1}
n = 3k-2\lambda-1.
\end{equation}
By Corollary~\ref{0416-3},
we have $\frac{2n}{k} = 6 - \frac{4\lambda+2}{k} \in \MB{Z}$,
i.e., $\frac{4\lambda+2}{k} \in \MB{Z}$.
On the other hand, since $0 \leq \lambda \leq k-1$,
we have $0 < \frac{2}{k} \leq \frac{4\lambda+2}{k} \leq 4 - \frac{2}{k} < 4$.
This implies $\frac{4\lambda+2}{k} = 1, 2, 3$.
Suppose $\frac{4\lambda+2}{k} = 1$.
Then (\ref{0805-1}) becomes $n = 10\lambda + 5$.
In this case,
we have $(n, k ,\lambda, \mu) = (10\lambda+5, 4\lambda+2, \lambda, 2\lambda+1)$.
We show that such undirected strongly regular graphs exist only if $\lambda = 0,1$.
The eigenvalues that are not $k$ are
\begin{equation} \label{0805-3}
\frac{-\lambda - 1 \pm \sqrt{\lambda^2 + 10\lambda + 5}}{2}.
\end{equation}
We first find the condition for $\lambda$ where these eigenvalues are integer.
For these eigenvalues to be integer,
it must be hold that $\lambda^2 + 10\lambda + 5 = s^2$ for some non-negative integer $s$.
This equality can be transformed to $(\lambda + s + 5)(\lambda - s + 5) = 20$,
which implies $\lambda = 1$ and $s = 4$.
We have $(n, k ,\lambda, \mu) = (15, 6, 1, 3)$.
We next consider the eigenvalues (\ref{0805-3}) are not integer.
In this case, we have $\lambda - (2\lambda+1) = -1$ by Lemma~\ref{0805-2}.
Thus we have $\lambda = 0$.
We have $(n, k ,\lambda, \mu) = (5, 2, 0, 1)$.
If $\frac{4\lambda+2}{k} = 2$, then $k = 2\lambda+1$, which is odd,
but in this case $\mu = \frac{k}{2} \not\in \MB{Z}$.
Suppose $\frac{4\lambda+2}{k} = 3$.
Then (\ref{0805-1}) becomes $n = 2\lambda + 1$.
In this case, we have
$(n, k ,\lambda, \mu) = (2\lambda+1, \frac{2}{3}(2\lambda+1), \lambda, \frac{1}{3}(2\lambda + 1))$.
We show that no undirected strongly regular graphs with such parameters exist.
The eigenvalues that are not $k$ are
\begin{equation} \label{0806-1}
\frac{\lambda-1 \pm \sqrt{\lambda^2+22\lambda+13}}{6}.
\end{equation}
We first find the condition for $\lambda$ where these eigenvalues are integer.
For these eigenvalues to be integer,
it must be hold that $\lambda^2 + 22\lambda + 13 = s^2$ for some non-negative integer $s$.
This equality can be transformed to $(\lambda + s + 11)(\lambda - s + 11) = 108$,
which implies $(\lambda, s) = (1, 6), (17, 26)$.
For $\lambda = 1$ this graph is a complete graph,
and for $\lambda = 17$ the valency $k = \frac{70}{3} \not\in \MB{Z}$.
Both cases are irrational.
We next consider the eigenvalues (\ref{0806-1}) are not integer.
In this case, we have $\lambda - \frac{1}{3}(2\lambda + 1) = -1$ by Lemma~\ref{0805-2}.
However, the solution $\lambda = -2$ is negative.
Consequently,
strongly regular graphs with parameters $(n, k ,\lambda, \mu) = (2\lambda+1, \frac{2}{3}(2\lambda+1), \lambda, \frac{1}{3}(2\lambda + 1))$ do not exist.
\end{proof}

Comparing with the result of Higuchi et al \cite{HKSS2017},
the parameters $(15, 6, 1, 3)$ newly appear.
This undirected strongly regular graph is uniquely determined from the parameters (see Lemma~10.9.3 of \cite{GR}), 
and is the complement of the line graph of the complete graph on 6 vertices,
i.e., it is $\overline{L(K_6)}$.

As for undirected periodic strongly regular graphs,
it is possible to eliminate the parameters $(15, 6, 1, 3)$ by using Proposition~\ref{0806-2}.
We make a slightly general argument to count the number of triangles from the parameters.
A $k$-regular undirected graph on $n$ vertices
is said to be {\it edge-regular with parameters $(n,k,\lambda)$}
if any two adjacent vertices have precisely $\lambda$ common neighbors.

\begin{lem}[Page~3 of \cite{BCN}] \label{0811-3}
{\it
Let $\G = (V, E)$ be an edge-regular graph with parameters $(n, k, \lambda)$.
Then, the number of triangles in $\G$ is $\frac{nk\lambda}{6}$.
}
\end{lem}

\begin{proof}
Let $A$ be the adjacency matrix of $\G$,
and let $t$ be the number of triangles in $\G$.
We consider the set of closed walks $\MC{T}_x$ from $x$ to $x$ of length $3$,
i.e., $\MC{T}_x = \{ (y,z) \in V \times V \mid x \sim y \sim z \sim x \}$,
where $u \sim v$ denotes that $u$ and $v$ are adjacent.
Then we have
\[
|\MC{T}_x| = \sum_{ \substack{y \in V \\ y \sim x}} | \{ z \in V \mid y \sim z \sim x \} |
= \sum_{ \substack{y \in V \\ y \sim x}} \lambda
= k\lambda.
\]
Thus,
$6t = \Tr(A^3) = \sum_{x \in V}(A^3)_{x,x} = \sum_{x \in V}|\MC{T}_x| = nk\lambda$,
which implies the statement.
\end{proof}

By Lemma~\ref{0811-3},
the number of triangles $t$ in the $(15, 6, 1, 3)$-strongly regular graph is $15$.
On the other hand, $\frac{16t}{k^3} = \frac{10}{9} \not\in \MB{Z}$,
and hence this strongly regular graph is not periodic by Proposition~\ref{0806-2}.
With this discussion,
the result of Higuchi et al \cite{HKSS2017} was restored in the sense of the necessary condition.
Note that the consideration made here is only necessary conditions for the periodicity.
A simple way to claim
that undirected strongly regular graphs that pass the necessary condition is indeed periodic
is to explicitly determine their eigenvalues.

\section{Mixed Hamming graphs} \label{0902-7}

We say that a mixed graph $G$ is a {\it mixed Hamming graph $H(d,q)$}
if $G^{\pm}$ is isomorphic to the Hamming graph $H(d,q)$.
We omit the definition of Hamming graphs.
Readers are referred to \cite{BCN}.
Periodic undirected Hamming graphs have been fully determined by Yoshie \cite{Y2019}.
In this section, we study its mixed graph analogy.

\begin{lem} \label{0812-1}
{\it
Let $x$ and $y$ be positive integers that are coprime.
For any $r \in \MB{Q}$ such that $x^r$ is integer,
$x^r$ and $y$ are coprime.
}
\end{lem}

\begin{proof}
Let the prime factorizations of $x$ and $y$ be
$x = p_1^{e_1}p_2^{e_2} \cdots p_t^{e_t}$ and $y = q_1^{f_1}q_2^{f_2} \cdots q_t^{f_t}$, respectively.
Since $x$ and $y$ are coprime, 
the prime numbers $p_1, \dots, p_s, q_1, \cdots, q_t$ are all distinct.
Then we have $x^r = p_1^{re_1}p_2^{re_2} \cdots p_t^{re_t}$,
but $p_1, \dots, p_s, q_1, \cdots, q_t$ are all distinct, so $x^r$ and $y$ are coprime.
\end{proof}

\begin{pro}
{\it
Let $G$ be a mixed Hamming graph $H(d,q)$.
If $G$ is periodic,
then $(d, q) = (2^f, 2)$ or $(3^f, 3)$ for some non-negative integer $f$.
}
\end{pro}

\begin{proof}
Note that the Hamming graph $H(d,q)$ has $q^d$ vertices and its valency is $d(q-1)$.
By Corollary~\ref{0416-3}, we have $\frac{2q^d}{d(q-1)} \in \MB{Z}$,
so $\frac{2q^d}{q-1} \in d\MB{Z} \subset \MB{Z}$.
Since $q$ and $q-1$ are coprime,
Lemma~\ref{0812-1} derives that $q^d$ and $q-1$ are coprime.
This implies $\frac{2}{q-1} \in \MB{Z}$, and hence $q=2,3$.
If $q=2$, then by Corollary~\ref{0416-3} again,
we have $\frac{2^{d+1}}{d} \in \MB{Z}$.
Thus, we have $d = 2^f$ for some $f$.
The same applies to the case $q=3$.
\end{proof}

We supplement on the undirected Hamming graphs.
In Yoshie's work \cite{Y2019},
we know that the undirected Hamming graphs $H(d,q)$ are periodic
only in the case $(d,q) = (1,2)$, $(1,3)$, $(2,2)$, $(2,3)$, or $(4,2)$.
However, even if the number of triangles is counted using Lemma~\ref{0811-3}
in both cases $q=2$ and $3$,
the periodicity can not be eliminated by Proposition~\ref{0806-2}.

\section{Mixed sesquilinear forms graphs} \label{0902-8}

In this section,
we provide good examples where the periodicity is eliminated using only Corollary~\ref{0416-3}.
As in the previous sections,
we say that a mixed graph $G$ is a {\it mixed $\G$} for an undirected graph $\G$
if $G^{\pm}$ is isomorphic to $\G$.
The examples provided here are sesquilinear forms graphs, whose details are in Chapter~9 of \cite{BCN}.
Definition, the number of vertices and valency of each graph are summarized in Table~\ref{0813-1}.

\begin{table}[h]
  \centering
  \begin{tabular}{|c|c|c|c|c|}
\hline
Notation & Vertex set & Adjacency & \#Vertices & Valency \\
\hline
\hline 
${\rm Bil}_{d \times e}(q)$ & $\MB{F}_{q}^{d \times q}$ & $\rank(M -N) = 1$ & $q^{de}$ & $\frac{(q^d -1)(q^e-1)}{q-1}$ \\ \hline
${\rm Alt}_{n}(q)$ & $\{ M \in \MB{F}_{q}^{n \times n} \mid M^{\top} = -M\}$ & $\rank(M -N) = 2$ & $q^{\frac{n(n-1)}{2}}$ & $\frac{(q^n -1)(q^{n-1}-1)}{q^2 - 1}$ \\ \hline
${\rm Her}_{n}(q^2)$ & $\{ M \in \MB{F}_{q^2}^{n \times n} \mid M^* = M\}$ & $\rank(M -N) = 1$ & $q^{n^2}$ & $\frac{q^{2n} - 1}{q+1}$ \\ \hline
${\rm Sym}_{n}(q)$ & $\{ M \in \MB{F}_{q}^{n \times n} \mid M^{\top} = M\}$ & $\rank(M -N) = 1$ & $q^{\frac{n(n+1)}{2}}$ & $q^n - 1$ \\ \hline
\end{tabular}
 \caption{Sesquilinear forms graphs} \label{0813-1}
\end{table}

Note that bilinear forms graphs ${\rm Bil}_{d \times e}(q)$, alternating forms graphs ${\rm Alt}_{n}(q)$
and Hermitian forms graphs ${\rm Her}_{n}(q^2)$ are distance-regular graphs,
but symmetric bilinear forms graphs ${\rm Sym}_{n}(q)$ are not distance-regular in general.

\begin{thm}
{\it
None of mixed ${\rm Bil}_{d \times e}(q)$, mixed ${\rm Alt}_{n}(q)$ for $n \geq 4$,
mixed ${\rm Her}_{n}(q^2)$ and mixed ${\rm Sym}_{n}(q)$ are periodic.
}
\end{thm}

\begin{proof}
We show only for mixed ${\rm Bil}_{d \times e}(q)$ and mixed ${\rm Alt}_{n}(q)$.
The same is shown for the remaining graphs.

First, we show for mixed ${\rm Bil}_{d \times e}(q)$. 
We assume $e \geq d \geq 2$ without loss of generality,
and suppose that ${\rm Bil}_{d \times e}(q)$ is periodic.
By Corollary~\ref{0416-3}, we have
\begin{equation} \label{0815-1}
\frac{2(q-1)q^{de}}{(q^d - 1)(q^e - 1)} \in \MB{Z}.
\end{equation}
Since $q^d$ and $q^d-1$ are coprime, $q^{de}$ and $q^d - 1$ are also coprime by Lemma~\ref{0812-1}.
Similarly, we see that $q^{de}$ and $q^e - 1$ are coprime.
Thus, (\ref{0815-1}) holds if and only if
\begin{equation} \label{0815-2}
\frac{2(q-1)}{(q^d - 1)(q^e - 1)} \in \MB{Z}.
\end{equation}
On the other hand, since $e \geq d \geq 2$ and $q \geq 2$,
we have
\[ 0 < \frac{2(q-1)}{(q^d - 1)(q^e - 1)} < \frac{2(q-1)}{(q^1 - 1)(2^2 - 1)} = \frac{2}{3} < 1. \]
This is a contradiction.

Next, we show for mixed ${\rm Alt}_{n}(q)$.
Suppose that ${\rm Alt}_{n}(q)$ is periodic.
By Corollary~\ref{0416-3}, we have
\begin{equation} \label{0815-3}
\frac{2q^{\frac{n(n-1)}{2}}(q^2 - 1)}{(q^n - 1)(q^{n-1} - 1)} \in \MB{Z}.
\end{equation}
Since $q^n$ and $q^n-1$ are coprime, $q^{n \cdot \frac{n-1}{2}}$ and $q^n - 1$ are also coprime by Lemma~\ref{0812-1}.
Similarly, we see that $q^{(n-1) \cdot \frac{n}{2}}$ and $q^{n-1} - 1$ are coprime.
Thus, (\ref{0815-3}) holds if and only if
\begin{equation} \label{0815-2}
\frac{2(q^2-1)}{(q^n - 1)(q^{n-1} - 1)} \in \MB{Z}.
\end{equation}
On the other hand, since $n > 3$ and $q \geq 2$,
we have
\[ 0 < \frac{2(q^2-1)}{(q^n - 1)(q^{n-1} - 1)} < \frac{2(q^2-1)}{(q^2 - 1)(2^{3-1} - 1)} = \frac{2}{7} < 1. \]
This is a contradiction.
\end{proof}

Note that both ${\rm Alt}_{n}(q)$ for $n=2, 3$ form the complete graphs.
The periodicity of mixed complete graphs is as discussed in Section~\ref{0504-5}.

\section{Mixed Johnson graphs} \label{0902-9}

In this section, we study the periodicity of mixed $J(n,k)$.
See \cite{BCN} for the definition and properties of Johnson graphs.
In Yoshie's work \cite{Y2019},
it is shown that undirected Johnson graphs $J(n,k)$ are periodic
if and only if $(n,k) = (2,1), (3,1)$ or $(4,2)$.
As we will see later,
the mixed graph analogy of his work is currently not as successful as in the previous sections.
However, we can provide an upper bound on the number of vertices when mixed $J(n,k)$ is periodic.
Recall that the Johnson graph $J(n,k)$ is a $k(n-k)$-regular graph with $\binom{n}{k}$ vertices.

\begin{pro}
{\it
If a mixed $J(n,k)$ is periodic,
then $n \leq k! + k$.
}
\end{pro}

\begin{proof}
We consider the condition that $\frac{2}{k(n-k)}\binom{n}{k}$ is an integer.
Let $N = n-k$,
and let $f(N) = (N+k)(N+k-1) \cdots (N+1) - k!$.
Then, we have $\frac{f(N)}{N} \in \MB{Z}$.
By Corollary~\ref{0416-3},
\begin{align*}
\MB{Z} \ni \frac{2}{k(n-k)}\binom{n}{k}
&= \frac{1}{k \cdot \frac{k!}{2} (n-k)} \cdot n(n-1) \cdots (n-k+1) \\
&= \frac{1}{k \cdot \frac{k!}{2}} \cdot \frac{1}{N} \cdot (N+k)(N+k-1) \cdots (N+1) \\
&= \frac{1}{k \cdot \frac{k!}{2}} \cdot \frac{1}{N} \cdot (f(N) + k!) \\
&= \frac{1}{k \cdot \frac{k!}{2}} \left( \frac{f(N)}{N} + \frac{k!}{N} \right).
\end{align*}
Thus,
$\frac{f(N)}{N} + \frac{k!}{N} \in \left( k \cdot \frac{k!}{2} \right) \MB{Z} \subset \MB{Z}$.
Since $\frac{f(N)}{N} \in \MB{Z}$, we have $\frac{k!}{N} = \frac{k!}{n-k} \in \MB{Z}$.
In particular, $\frac{k!}{n-k} \geq 1$, so the claim is obtained.
\end{proof}

From this proposition,
for each $k$, there are finite number of $n$'s for which $J(n,k)$ can be periodic.
By computer,
we find that $\frac{2}{k(n-k)}\binom{n}{k}$ is not integer for $3 \leq k \leq 11$.
Thus, mixed $J(n,k)$ is not periodic for such $k$.
However, when $k=12$ and $n=27$,
$\frac{2}{k(n-k)}\binom{n}{k} = \frac{2}{12 \cdot 15}\binom{27}{12} = 193154$ is integer.

\section*{Acknowledgements}
The author would like to thank the reviewer
for valuable comments and suggestions for improving the manuscript.
The author is supported by JSPS KAKENHI (Grant No. 20J01175).

\end{document}